\theoremstyle{plain}
\newtheorem{thm}{Theorem}[section]
\newtheorem{lem}[thm]{Lemma}
\newtheorem{prop}[thm]{Proposition}
\newtheorem{cor}[thm]{Corollary}
\theoremstyle{definition}
\newtheorem{defn}[thm]{Definition}
\newtheorem{exmp}[thm]{Example}
\newtheorem{rem}[thm]{Remark}
\newtheorem*{SA}{Standing Assumption}
\newcommand{\ra}{\rightarrow}
\newcommand{\lra}{\longrightarrow}
\newcommand{\id}{\mathrm{id}}
\newcommand{\with}{\&}
\begin{document}

\newcommand{\bfA}{\mathbf{A}}
\newcommand{\bfB}{\mathbf{B}}
\newcommand{\bfC}{\mathbf{C}}
\newcommand{\sQ}{\mathsf{Q}}
\newcommand{\sK}{\mathsf{K}}
\newcommand{\sLu}{\mathsf{\text{\L}}}
\newcommand{\sM}{\mathsf{M}}
\newcommand{\ev}{\mathrm{ev}}
\newcommand{\Set}{\mathbf{Set}}
\newcommand{\Cat}{\mathbf{Cat}}
\newcommand{\QCat}{Q\text{-}\Cat}
\newcommand{\RCat}{[0,1]\text{-}{\Cat}}
\newcommand{\KCat}{{K}\text{-}\Cat}
\newcommand{\qref}{\sQ\text{-}\mathbf{Ref}}
\newcommand{\QRel}{\sQ\text{-}\mathbf{Rel}}
\newcommand{\bbt}{\vmathbb{2}}
\newcommand{\Ra}{\Rightarrow}
\newcommand{\sft}{\mathsf{2}}
\newcommand{\Ord}{\mathbf{POrd}}
\newcommand{\bv}{\bigvee}
\newcommand{\bw}{\bigwedge}
\newcommand{\Lra}{\Longrightarrow}
\newcommand{\MCat}{{M}\text{-}\mathbf{Cat}}
\newcommand{\sfM}{\mathsf{M}}
\newcommand{\Idm}{\mathbf{Idm_{\with}}}
\newcommand{\Lu}{\text{\L}}
\newcommand{\YCom}{\mathbf{YCCat}}
\newcommand{\YComM}{{M}\text{-}\mathbf{YCCat} }
\newcommand{\YComK}{K\text{-}\mathbf{YCCat}}
\newcommand{\lam}{\lambda}
\newcommand{\de}{\delta}
\newcommand{\ga}{\gamma} 
\newcommand{\al}{\alpha}
\newcommand{\be}{\beta}
\newcommand{\xlam}{\{x_\lambda\}_{\lambda\in D}}
\newcommand{\zlam}{\{x_\lambda\}_{\lambda\in D}}
\newcommand{\flam}{\{f_\lambda\}_{\lambda\in D}}
\newcommand{\flamx}{\{f_\lambda(x)\}_{\lambda\in D}}
\newcommand{\dt}{{d_{_\Pi}}}

\begin{frontmatter}	

\title{ Cartesian closed and stable subconstructs of $[0,1]$-$\mathbf{Cat}$}

 \author{Hongliang Lai \corref{cor}}
\ead{hllai@scu.edu.cn}

\author{Qingzhu Luo}
\ead{luoqingzhu@foxmail.com }

\cortext[cor]{Corresponding author.}
\address{School of Mathematics, Sichuan University, Chengdu 610064, China}
  
\begin{abstract}
Let $\with$ be a continuous triangular norm on the unit interval $[0,1]$ and $\mathbf{A}$ be a cartesian closed and stable subconstruct of the category consisting of all real-enriched categories. Firstly, it is shown that the category $\mathbf{A}$ is cartesian closed if and only if it is determined by a suitable subset $S\subseteq{M^2}$ of $[0,1]^2$, where $M$ is the set of all elements $x$ in $[0,1]$ such that $x\with x$ is idempotent. Secondly, it is shown that all Yoneda complete real-enriched categories valued in the set $M$ and Yoneda continuous $[0,1]$-functors   form a cartesian closed category.
\end{abstract}	

\begin{keyword}   enriched category\sep    continuous triangular norm\sep stable subconstruct\sep cartesian closed category\sep Yoneda complete. 
\end{keyword}
\end{frontmatter}

\section{Introduction}

Let $\&$ be a continuous triangular norm on the unit interval $[0,1]$. Then $\sQ=([0,1],\with,1)$ becomes a commutative and unital quantale \cite{Rosen1990}.  Categories enriched over $\sQ$ is called real-enriched categories \cite{Zhang2024}. As observed  by Lawvere \cite{Lawvere1973} in 1973, there is a deep connection between enriched categories and generalized logic. Particularly, for real-enriched categories, this generalized logic is the $BL$-logic  developed by H\'{a}jek \cite{Hajek1998}.  Therefore,
real-enriched categories may be viewed as ordered sets in the sense of $BL$-logic. Real-enriched categories encompass  preordered sets,  generalized metric spaces and fuzzy preordered sets \cite{Klement2000,Zade1971} in a unified framework. Naturally, they  become  the fundamental objects for quantitative domain theory, see e.g.  \cite{America1989,Antoniuk2011,BBR1998,Flagg1997,Gutierres2013,Hofmann2011,Hofmann2018,Kunzi2002,Lai2016}.   

Cartesian closed categories  are of particular interest in domain theory \cite{Gierz2003}, as explained: ``One of the most noteworthy features of $\mathbf{DCPO}$ is that it is cartesian closed. Not only is this fundamental for the application of continuous lattices and domains to logic and computing, but it also provides evidence of the mathematical naturalness of the notion.''  When the quantitative domain theory is concerned,  Yoneda complete quantale-enriched categories, in which all forward Cauchy nets (or forward Cauchy sequences) are convergent,  are often treated as quantitative preordered sets satisfying the directed completeness \cite{BBR1998,Fan2001,  Flagg1997,Flagg2002,Lai2006,Wagner1994,Wagner1997}.    However, unlike its classical counterpart,  the category $\mathbf{YCCat}$ consisting of Yoneda complete real-enriched categories and Yoneda  continuous  $[0,1]$-functors  need not be cartesian closed in general. In fact, it is cartesian closed if and only   if the continuous triangular norm $\with=\wedge$ \cite [Theorem 4.7]{Lai2016}. 

 Consider the category $\RCat$ consisting of all real-enriched categories and $[0,1]$-functors. Since   $\mathbf{YCCat}$ is a reflective subcategory of $\RCat$, Yoneda complete real-enriched categories $A$ and $B$  share the product object in   both   $\RCat$ and $\mathbf{YCCat}$.  If the power object $B^A$ in $\RCat$  exists, then their power object in $\mathbf{YCCat}$  is a subobject of $B^A$. However, it is not easy to seek the power object of $A$ and $B$ in  $\RCat$. It is even unfortunate that the power object in $\RCat$ may fail to exist in general \cite{Clem03,Clem06}. Therefore, we   give up the whole category $\RCat$ but consider its cartesian closed and full subcategory $\mathbf{A}$. 
Since the  power objects $B^A$ do exist for all $A,B$ in $\mathbf{A}$, one can seek the   power object $[A\ra B]$ of Yoneda complete real-enriched categories $A$ and $B$ in $\mathbf{A}\cap\YCom$, as a subobject of the power object $B^A$  in $\mathbf{A}$.


In this paper, firstly, we find out all cartesian closed and stable subcategories of $\RCat$. Observe that the category $\RCat$ contains a trivial cartesian closed and stable subcategory $\Ord$, consisting of all crisp preordered sets and order preserving maps.  Moreover,   in a particular case, let $\sLu=([0,1],\with_{\Lu},1)$, where $\with_\Lu$ is the {\L}ukasiewicz triangular norm, and $\sLu_3=(\{0,0.5,1\},\with_\Lu,1)$ be the subquantale of $\sLu$, then the category $\sLu\text{-}\Cat$ has a stable  subcategory $\sLu_3$-$\Cat$. The category $\sLu_3$-$\Cat$ is cartesian closed \cite [Example 4.8]{Lai2016}. Therefore, one obtains a chain consisting of cartesian closed categories in the below: 
\[
\Ord\subseteq\sLu_3\text{-}\Cat\subseteq\sLu\text{-}\Cat.
\]
Thus, one can see that there may be  non-trivial cartesian closed and stable subcategories of $\RCat$ which is larger than  $\Ord$ even if the quantale $\sLu$ is not a frame.  In fact,  we characterize explicitly all  cartesian closed and stable subcategories of $\RCat$, which are determined by suitable subsets $S\subseteq M^2$ of $[0,1]^2$, where $M=\{a\in[0,1]\mid a\& a \text{ is idempotent}\}$.  

Secondly, we show that the category $\YComM$, which consists of all Yoneda complete real-enriched categories valued in the set $M$ and Yoneda continuous $[0,1]$-functors, is cartesian closed. Thus, one obtains a chain of cartesian closed categories:
$$\mathbf{DCPO}\subseteq\YComK\subseteq\YComM,$$
where $(K,\with,1)$ is a complete subquantale of $(M,\with,1)$.
  
The contents are arranged as follows.
 Section 2: Some basic concepts about   real-enriched categories  are recalled.
Section 3:  Describe all  stable subcategories of $\RCat$. 
Section 4: Characterize  all cartesian closed and stable subcategories  of $\RCat$. Section 5: Prove that for each complete subquantale $(K,\with,1)$ of $(M,\with,1)$,    the category $\YComK$ is  cartesian closed.

\section{Real-enriched categories}

In this section, some basic notions of real-enriched categories are recalled. Terminologies and notations are mainly from \cite{Zhang2024}.

A   \emph{triangular norm} \cite{Klement2000} (\emph{ t-norm} for short)    is a  binary operation $\with$ on  the unit interval $[0,1]$ such that for all $x,y,z\in [0,1]$,  
\begin{itemize}
    \item[(1)] $x\with y=y\with x;$
    \item[(2)] $x\with(y\with z)=(x\with y)\with z;$
    \item [(3)]$x\with y\leq x\with z \text{ whenever } y\leq z;$
    \item [(4)]$x\with 1=x$.
\end{itemize}
That is, the triple $([0,1],\with, 1)$ is a commutative po-monoid \cite{Birk1973}. A t-norm $\with$ is called to be \emph{continuous} if $\with: [0, 1]\times [0, 1]\lra [0, 1]$ is a continuous function with respect to the usual topology. In this case, the function $x\with-:[0,1]\lra [0,1]$ preserves arbitrary joins for each $x\in [0,1]$, hence the triple $([0,1],\with,1)$ is also a quantale \cite{Rosen1990}.


There are three basic continuous t-norms listed in the below:
\begin{itemize}
\item the G\"{o}del t-norm: $x\with_G y=x\wedge y$;
\item the product t-norm $x\with_P y=x\cdot y$; 
\item the  {\L}ukasiewicz t-norm: $x\with_{\text{\L}} y=\max\{x+y-1, 0\}$.
\end{itemize}

One can obtain all continuous t-norms by ordinal sums of the product t-norm and {\L}ukasiewicz t-norm. 

\begin{thm}(\cite{Klement2000})\label{Ordinal sum}
 Every continuous t-norm
$\&$ on $[0, 1]$ is an ordinal sum of the product t-norm and the {\L}ukasiewicz t-norm. That means,   there is a countable set of disjoint open intervals 
$\{(a_i, b_i)\}_{i\in I}$ in $[0, 1]$ such that
\begin{enumerate}
\item[{\rm (i)}] for each $i\in I$, both $a_i$ and $b_i$ are idempotent and there is an order isomorphism $\varphi_i:[0,1]\lra[a_i,b_i]$ satisfying either
 $\varphi_i(x\cdot y)=\varphi_i(x)\with\varphi_i(y)$ or $\varphi_i(x\with_{\text{\L}}y)=\varphi_i(x)\with\varphi_i(y)$;
\item[{\rm (ii)}] $x \& y =x\wedge y$ whenever $(x, y)\not\in\bigcup_{i\in I}(a_i,b_i)^2$.
\end{enumerate}
 
Each $[a_i, b_i]$ with the restriction of $\&$ is called an Archimedean block of $\&$.
\end{thm}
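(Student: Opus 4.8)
The plan is to reconstruct the whole t-norm from its idempotent elements. First I would set $E=\{x\in[0,1]\mid x\with x=x\}$ and note that, since $x\with x\le x\with 1=x$ always holds, $E$ is precisely the zero set of the continuous map $x\mapsto x-(x\with x)$; hence $E$ is closed and contains $0$ and $1$. Consequently $[0,1]\setminus E$ is open, so it is a disjoint union of open intervals $(a_i,b_i)$; each contains a rational number, so the index set $I$ injects into $\mathbb{Q}$ and is countable, while each endpoint $a_i,b_i$ lies in the closed set $E$ and is therefore idempotent.

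The key elementary input is that idempotents act as cut points. A continuous t-norm is \emph{divisible}: for $x\le y$ the intermediate value theorem applied to the continuous map $z\mapsto y\with z$ produces some $z$ with $y\with z=x$. Using this I would prove that $x\with e=\min\{x,e\}$ for every idempotent $e$: when $x\ge e$ one has $e=e\with e\le x\with e\le 1\with e=e$, and when $x\le e$ one writes $x=e\with z$ and computes $x\with e=(e\with z)\with e=(e\with e)\with z=e\with z=x$. Statement (ii) is then immediate: if $(x,y)\notin\bigcup_{i\in I}(a_i,b_i)^2$ with $x\le y$, then either $x$ is idempotent or $x\in(a_i,b_i)$ forces $y\ge b_i$, so in every case there is an idempotent $e$ with $x\le e\le y$, and $x=x\with e\le x\with y\le x\with 1=x$ gives $x\with y=\min\{x,y\}$.

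For part (i) I would localise on a block. Each $[a_i,b_i]$ is closed under $\with$, since $a_i=a_i\with a_i\le x\with y\le x\le b_i$ for $x,y\in[a_i,b_i]$, and $b_i$ acts as a unit there because $x\with b_i=\min\{x,b_i\}=x$; moreover $(a_i,b_i)$ contains no idempotent by construction. Transporting $\with$ along an order isomorphism $[0,1]\lra[a_i,b_i]$ that sends $1$ to $b_i$ therefore yields a continuous t-norm on $[0,1]$ having no idempotent in $(0,1)$, that is, a continuous \emph{Archimedean} t-norm. It remains to classify these: by the representation theorem for continuous Archimedean t-norms, each admits a continuous additive generator $f$, and is isomorphic to the product t-norm in the \emph{strict} case $f(0)=\infty$ (no zero divisors) and to the {\L}ukasiewicz t-norm $\with_{\text{\L}}$ in the \emph{nilpotent} case $f(0)<\infty$. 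This dichotomy supplies the required isomorphisms $\varphi_i$.

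\textbf{The main obstacle} is exactly this last classification (the Mostert--Shields theorem on one-parameter topological semigroups, or equivalently Acz\'el's functional-equation construction of the generator): producing the additive generator from the semigroup structure of an Archimedean t-norm is a genuinely analytic argument, whereas the idempotent bookkeeping and the min-outside-the-blocks conclusion are comparatively routine consequences of continuity and divisibility.
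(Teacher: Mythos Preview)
The paper does not prove this theorem: it is quoted from \cite{Klement2000} as background, with no argument supplied beyond the citation. So there is nothing in the paper to compare your proposal against.

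That said, your sketch is the standard route taken in the cited source. The decomposition via the closed set of idempotents, the divisibility argument for $x\with e=x\wedge e$ at idempotent $e$ (which is exactly the content of the paper's Lemma~\ref{contain idem}), and the reduction of each block to a continuous Archimedean t-norm are all correct and routine. You have also correctly isolated the genuine analytic input: the Mostert--Shields/Acz\'el representation of continuous Archimedean t-norms by additive generators, yielding the product/\L ukasiewicz dichotomy. One small clarification worth making explicit is that the $\varphi_i$ in the statement is not an arbitrary order isomorphism but the composite of any order isomorphism $[0,1]\to[a_i,b_i]$ with the isomorphism supplied by the classification; your text implicitly does this in two steps, which is fine.
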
 

\begin{lem}(\cite{Klement2000})\label{contain idem}
    Let $\with$ be a continuous t-norm on $[0,1]$ and $p$ be an idempotent element of $\with$. Then $x\with y=x\wedge y$ whenever $x\leq p\leq
    y.$
\end{lem}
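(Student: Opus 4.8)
The plan is to reduce the claimed equality to a single inequality and then dispatch it using the ordinal sum decomposition of Theorem~\ref{Ordinal sum}. Since $x\leq p\leq y$ forces $x\leq y$, we have $x\wedge y=x$; moreover monotonicity together with $x\with 1=x$ gives $x\with y\leq x\with 1=x$. Hence one half of the identity is automatic, and the entire content of the lemma is the reverse estimate $x\with y\geq x$.

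To obtain it, I would show that the pair $(x,y)$ lies outside every square $(a_i,b_i)^2$ attached to an Archimedean block, so that clause~(ii) of Theorem~\ref{Ordinal sum} applies directly and yields $x\with y=x\wedge y=x$. Argue by contradiction: if $(x,y)\in(a_i,b_i)^2$ for some $i$, then $a_i<x$ and $y<b_i$, and combining with $x\leq p\leq y$ we obtain $a_i<p<b_i$, that is, $p$ is an \emph{interior} point of the block $[a_i,b_i]$.

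The step that does the real work is the observation that an interior point of an Archimedean block cannot be idempotent. Indeed, the order isomorphism $\varphi_i\colon[0,1]\lra[a_i,b_i]$ of Theorem~\ref{Ordinal sum}(i) conjugates the restriction of $\with$ to either the product or the {\L}ukasiewicz t-norm, both of which have only $0$ and $1$ as idempotent elements. Transporting this along $\varphi_i$ shows that the only idempotents of $\with$ lying in $[a_i,b_i]$ are the endpoints $a_i$ and $b_i$, so $p\in(a_i,b_i)$ contradicts the hypothesis that $p$ is idempotent. Therefore no such block exists, $(x,y)\notin\bigcup_{i\in I}(a_i,b_i)^2$, and the conclusion follows from clause~(ii).

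I expect the only delicate point to be this idempotency transfer along $\varphi_i$; once it is granted that blocks carry no interior idempotents, everything else is immediate from the t-norm axioms. As a more self-contained alternative that sidesteps the ordinal sum theorem, one may instead invoke divisibility of continuous t-norms: choosing $z$ with $p\with z=x$ (possible since $x\leq p$), associativity and idempotency give $x\with p=(p\with z)\with p=(p\with p)\with z=p\with z=x$, after which monotonicity in the variable $y\geq p$ upgrades $x=x\with p\leq x\with y\leq x$ to the desired $x\with y=x$.
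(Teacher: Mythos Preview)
The paper does not prove this lemma at all: it is simply quoted from \cite{Klement2000} with no argument, so there is no ``paper's own proof'' to compare against. Both of your arguments are correct. The first one is sound, though somewhat heavy-handed in that it invokes the full ordinal sum representation (Theorem~\ref{Ordinal sum}) to establish a fact that is logically prior to it. Your second argument via divisibility is the standard one and is essentially what appears in \cite{Klement2000}: continuity of $\with$ in each variable guarantees that $p\with[0,1]=[0,p]$, so for $x\leq p$ one may pick $z$ with $p\with z=x$; then idempotency of $p$ gives $x\with p=x$, and monotonicity in the second variable finishes. This route is preferable here because it is self-contained and does not presuppose the structure theorem.
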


\begin{SA} In the rest of this paper, $\with$ is always assumed to be a continuous t-norm.\end{SA}

\begin{defn}(\cite{Zhang2024})
    Let $\with$ be a continuous t-norm. A \emph{real-enriched category}, or a \emph{$[0,1]$-category} for short, is a pair $(X,r)$, where $X$ is a set and $r: X^2 \lra  [0,1]$ is a map such that 
    \begin{itemize}
        \item [(i)] $r(x,x)=1$ for all $x\in X$;
        \item [(ii)] $r(y,z) \with r(x,y) \leq r(x,z)$ for all $x,y,z \in X$. 
    \end{itemize}
\end{defn}
A $[0,1]$-category $(X,r)$ is \emph{symmetric} if $r(x,y)=r(y,x)$ for all $x,y\in X.$
If one interpret the value $r(x, y)$ as the truth degree that $x$ is smaller than or
equal to $y$, then a real-enriched category
can be regarded as a quantitative preordered set in Lawvere's sense \cite{Lawvere1973}, or a fuzzy preordered set in the fuzzy set theory \cite{Belo2002,Klement2000}.


 Suppose both $(X,r)$ and $(Y,s)$ are $[0,1]$-categories. A map $f:X\lra Y$  is called a \emph{$[0,1]$-functor} if  
$$r(x_1,x_2) \leq s(f(x_1),f(x_2))$$ for all $x_1$, $x_2$  in $X$. All $[0,1]$-categories  and  $[0,1]$-functors form a category, denoted by  
 $$\RCat.$$  

In the following example, it is shown that preordered sets and generalized metric spaces can all be regarded as  real-enriched categories. 

\begin{exmp}
\begin{itemize}
\item[(1)] Let $(X,\leq)$ be a preordered set. Define a map $r_\leq:X\times X\lra[0,1]$ by  
\[r_\leq(x,y)=\begin{cases}
		0,&x\nleq y,\\
		1,&x\leq y.
		\end{cases} \]
 Clearly, $(X,r_\leq)$ is a real-enriched category. Moreover, the assignment  $(X,\leq)\mapsto(X,r_\leq)$ on objects gives a full embedding functor
 \[\pi:\Ord\lra\RCat,\]
 where $\Ord$ is the category of all preordered sets and order preserving maps.

\item[(2)] A map $d:X\times X\lra[0,\infty]$ is called a generalized metric \cite{Lawvere1973} on the set $X$ if it satisfies that  
\begin{enumerate}
\item[(i)] $d(x,x)=0$ for all $x\in X$,
\item[(ii)] $d(x,y)+d(y,z)\geq d(x,z)$ for all $x$, $y$ and $z$ in $X$.
\end{enumerate}
A map $f$ between generalized metric spaces $(X,d_1)$ and $(Y,d_2)$ is called to be non-expansive if 
\[d_1(x,y)\geq d_2(f(x),f(y)).\]
All generalized metric spaces and non-expansive maps form a category, denoted by $\mathbf{GMet}$.
Let $\&$ be the product t-norm, since the map $x\mapsto -\ln x$ is an isomorphism between the po-monoids  $([0,1],\cdot,1)$ and $([0,\infty]^{\mathrm{op}},+,0)$, it induces an isomorphism between the category $\RCat$ and $\mathbf{GMet}$.  In this case,   a real-enriched category $(X,r)$ is assigned to a generalized metric space $(X, -\ln r)$. 
\end{itemize}
\end{exmp}

\section{Stable subconstructs of $\RCat$}

In this section, we consider the stable subconstructs of the concrete category $\RCat$. Our main technique is similar to that in \cite{Lowen2015} for quasi-metric spaces. We  make some slight changes for real-enriched categories.
   
Recall from \cite{Acc} that a construct is a concrete category $(\bfA,U)$ over $\Set$ with $U$ being the forgetful functor.  A subcategory $\bfA$ of a construct $\bfB$ is called a subconstruct. A subconstruct $\bfA$ of $\bfB$ is called \emph{concretely reflective} in $\bfB$ if for each $\bfB$-object there is an identity-carried $\bfA$-reflection arrow. Reflectors induced by identity-carried reflection arrows are called concrete reflections. 
A concretely reflective subcategory of an amnestic concrete category is always full \cite[Proposition 5.24]{Acc}.
Concretely  coreflective subconstructs and concrete coreflections are defined dually. 

A subconstruct $\bfA$ of a construct $\bfB$ is  \emph{stable} \cite{Lowen2015} if $\bfA$ is simultaneously concretely reflective and concretely coreflective in $\bfB$.  

Let $(\bfA,U)$ be a construct, a \emph{source} is a family of morphisms $\{f_i: A\lra{A_i}\}_{i\in I}$ with domain $A$ and codomain $A_i$. A source $\{A\lra{A_i}\}_{i\in I}$ in $\bfA$ is \emph{initial} provided that a set-map $f: UB\lra{UA}$ is an $\bfA$-morphism whenever each composite $f_i\circ f: UB\lra{UA_i}$ is an $\bfA$-morphism. Dually, a \emph{sink} is a family of morphisms $\{f_i: A_i\lra{A}\}_{i\in I}$ with domain $A_i$ and codomain $A$. A sink $\{A_i\lra{A}\}_{i\in I}$ in $\bfA$ is \emph{final} provided that a set-map $f: UA\lra{UB}$ is an $\bfA$-morphism whenever each composite $f\circ f_i: UA_i\lra{UB}$ is an $\bfA$-morphism. 

A construct $(\bfA,U)$ is \emph{topological} \cite{Acc} if every $U$-structured source $\{f_i: X\lra{UA_i}\}_{i\in I}$ has a unique initial lift $\{f_i: A\lra{A_i}\}_{i\in I}$ in $\bfA$, which implies the existence of unique final lifts of $U$-structured sinks.
\begin{exmp}
    The category $\RCat$ is topological.  For a structured source $\{f_i: X\lra{(X_i,r_i)}\}_{i\in I}$  in $\Set$, the  initial lift in $\RCat$ is given by $\{f_i: (X,d_{in})\lra{(X_i,r_i)}\}_{i\in I}$ with $$d_{in}(x,y)=\bw_{i\in I} r_i(f_i(x),f_i(y)).$$
    For a structured sink $\{f_i: (X_i,r_i)\lra X\}_{i\in I}$  in $\Set$, the  final lift in $\RCat$ is given by $\{f_i: (X_i,r_i)\lra (X,d_{fin})\}_{i\in I}$ with $$d_{fin}(x,y)=\bw\{s(x,y)\mid (X,s)\text{ is a $[0,1]$-category with } r_i(x,y)\leq s(f_i(x),f_i(y)) \text{ for all } i\in I \}.$$  
\end{exmp}

A full subconstruct $\bfA$ of a construct $\bfB$ is \emph{initially closed}  if every initial source  in $\bfB$ whose codomain is a family of $\bfA$-objects has its domain  in $\bfA$. And a full subconstruct $\bfA$ of a construct $\bfB$ is \emph{finally closed}  if every final sink  in $\bfB$ whose domain is a family of $\bfA$-objects has its codomain  in $\bfA$.

\begin{prop}(\cite [Proposition 21.31]{Acc})\label{icfc}
  For any full subconstruct $\bfA$ of a topological construct $\bfB$ the following conditions are equivalent:
  \begin{itemize}
      \item [(1)] $\bfA$ is a stable subconstruct of $\bfB$.
      \item [(2)] $\bfA$ is both initially closed and finally closed in $\bfB$.
  \end{itemize}
\end{prop}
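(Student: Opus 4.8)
The plan is to prove the equivalence by decoupling it into two dual halves that match the two ingredients of stability against the two closure conditions. Concretely, I would first establish the single implication-pair
\[
\bfA \text{ concretely reflective in } \bfB \iff \bfA \text{ initially closed in } \bfB,
\]
and then obtain its dual,
\[
\bfA \text{ concretely coreflective in } \bfB \iff \bfA \text{ finally closed in } \bfB,
\]
simply by reversing arrows (initial sources become final sinks, initial lifts become final lifts, reflections become coreflections). Since stability is by definition the conjunction ``concretely reflective \emph{and} concretely coreflective'', combining the two equivalences yields $(1)\iff(2)$ at once. So all the genuine content sits in the first equivalence.

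For its forward direction, I would assume $\bfA$ concretely reflective and take an initial source $\{f_i\colon B\to A_i\}_{i\in I}$ in $\bfB$ with every $A_i\in\bfA$, aiming to conclude $B\in\bfA$. Let $r_B=\id_{UB}\colon B\to RB$ be the identity-carried reflection and $\bar f_i\colon RB\to A_i$ the induced $\bfA$-morphisms with $\bar f_i\circ r_B=f_i$; since $r_B$ is identity-carried, $U\bar f_i=Uf_i$. I would then test the underlying identity $\id_{UB}\colon RB\to B$ against initiality of the source: each composite $f_i\circ\id_{UB}$ has underlying map $Uf_i$, hence equals $\bar f_i$ by faithfulness of $U$ and is therefore a $\bfB$-morphism, so initiality forces $\id_{UB}\colon RB\to B$ to be a $\bfB$-morphism. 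As $r_B$ is a $\bfB$-morphism in the opposite direction, $\id_{UB}$ is a $\bfB$-isomorphism between $B$ and $RB$; the amnesticity of $\bfB$ (which follows from the uniqueness of initial lifts) upgrades this identity-carried isomorphism to $B=RB\in\bfA$.

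For the converse I would assume $\bfA$ initially closed, fix $B\in\bfB$, and form the structured source of all underlying maps of $\bfB$-morphisms $g\colon B\to A$ with $A\in\bfA$; its initial lift $\{g\colon RB\to A\}$ sits on the set $UB$, and initial closedness gives $RB\in\bfA$. The identity $r_B=\id_{UB}\colon B\to RB$ is a $\bfB$-morphism because every $g\circ r_B=g\colon B\to A$ already is, and $r_B$ is the required reflection: any $f\colon B\to A'$ with $A'\in\bfA$ lies in the generating family, hence factors through the initial source as $f\colon RB\to A'$, with $f\circ r_B=f$ and uniqueness forced by faithfulness of $U$ together with $r_B$ being identity-carried. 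The main obstacle I anticipate is exactly this construction, since the family of all morphisms out of $B$ into $\bfA$ is a priori a proper class; I would resolve it by invoking that in a topological construct every structured source---including a large one---admits an initial lift, so the object $RB$ genuinely exists, after which the universal property is a routine diagram chase. Dualizing this entire argument delivers the coreflective/finally-closed equivalence and completes the proof.
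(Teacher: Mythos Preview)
The paper does not supply its own proof of this proposition; it simply cites \cite[Proposition 21.31]{Acc} and moves on. Your argument is correct and is essentially the standard proof one finds in that reference: splitting stability into its reflective and coreflective halves, matching each against the corresponding closure condition via the universal property of initial (resp.\ final) lifts, and invoking amnesticity to turn the identity-carried isomorphism $B\cong RB$ into an equality. The one delicate point you flagged---that the source of all $\bfB$-morphisms from $B$ into $\bfA$ may be large---is handled exactly as you say, since topological constructs admit initial lifts of arbitrary (not just small) structured sources.
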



The category $\RCat$ contains $\Ord$   as a stable subconstruct. 
In fact, the  embedding functor 
 \[\pi:\Ord\lra\RCat\]
 is simultaneously  concretely reflective and concretely coreflective. 
\begin{center}
\begin{tikzpicture}
\draw[>->](0,0)node[left]{$\Ord$}--(1.5,0)node[right]{$\RCat$};
\draw[->>](1.5,.4)--(0,.4);
\node at(.75,.4)[above]{$\sigma$};
\node at(.75,-.05)[above]{$\bot$};
\draw[->>](1.5,-.4)--(0,-.4);
\node at(.75,-.45)[above]{$\bot$};
\node at(.75,-.4)[below]{$\rho$};
\end{tikzpicture}
\end{center}
For each $[0,1]$-category $(X,r)$, the coreflection $\rho(X)=(X,\leq_{\rho(r)})$ is equipped with the greatest preorder below $r$ given by 
$$x\leq_{\rho(r)}y \iff r(x,y)=1,$$
while the reflection $\sigma(X)=(X,\leq_{\sigma(r)})$ is equipped with the smallest preorder on $X$ above $r$, which is the transitive closure of the reflexive relation 
 $\leq_{\sigma'(r)}$ given by $$x\leq_{\sigma'(r)}y\iff r(x,y)\neq 0.$$


 Now we describe all stable subconstructs of $\RCat$. Let $(\bfA,U)$ be a construct, the \emph{fibre} $\bfA(X)$ of a set $X$ is the class consisting of all $\bfA$-objects $A$ with  $UA=X$ ordered by $A\leq B$ if and only if $\id_X:A\lra B$ is an $\bfA$-morphism. The construct $\RCat$ is \emph{fibre-small}, that is,  the fibre $\RCat(X)$ of each set $X$ is small. Our main idea is based on the fact that  the fibre $\RCat(\bbt)$ of the set $\bbt=\{0,1\}$ is  finally dense in $\RCat$.  So, the stable subconstructs of $\RCat(\bbt)$ determine all stable subconstructs of $\RCat$. 

Let $(\bbt,r)$ be a $[0,1]$-category. Then $r$ is completely determined by the values of $r(0,1)=a$ and  $r(1,0)=b$ since $r(0,0)=r(1,1)=1$ holds always. Conversely, every pair $(a,b)\in [0,1]^2$ determines a two-point $[0,1]$-category $(\bbt,r)$ with $r(0,0)=r(1,1)=1,r(0,1)=a$ and $r(1,0)=b$. Thus, by abuse  of notation, we often write the $[0,1]$-category $(\bbt, r)$ as $(\bbt,a,b)$ for a pair $(a,b)\in [0,1]^2$. That is, there is an order isomorphism $$\RCat(\bbt)\cong [0,1]^2.$$  Moreover, the fibre $\bfA(\bbt)$ can be viewed as a subset of $[0,1]^2$ for any subconstruct $\bfA$ of $\RCat$. 



\begin{prop}\label{suit-set} Let $\with$ be a continuous t-norm. If $\bfA$ is a stable subconstruct of $\RCat$, then the fibre $\bfA(\bbt)$ satisfies the following properties: 
\begin{itemize}
\item[{\rm (S1)}]    $(a_i,b_i)\in \bfA(\bbt)$ $\Lra$   $\bigvee(a_i,b_i)\in\bfA(\bbt)$, $\bigwedge(a_i,b_i)\in\bfA(\bbt)$;
\item[{\rm (S2)}]  $(a,b)\in\bfA(\bbt)$ $\Lra$ $(b,a)\in\bfA(\bbt)$;
\item[{\rm (S3)}]  $(a_1,b_1)\in\bfA(\bbt)$, $(a_2,b_2)\in\bfA(\bbt)$ $\Lra$  $(a_1\with a_2,b_1\with b_2)\in\bfA(\bbt)$.
\end{itemize}
\end{prop}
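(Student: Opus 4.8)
The plan is to invoke Proposition~\ref{icfc}, which reduces stability to being \emph{simultaneously initially closed and finally closed} in $\RCat$, and then to realize each of the three target elements as the domain of an initial source, respectively as the codomain of a final sink, built from the given $\bfA$-objects on the two-point set $\bbt$. Throughout I would use the explicit formulas for initial and final lifts in $\RCat$ recorded in the Example, together with the identification $\RCat(\bbt)\cong[0,1]^2$, under which the order on the fibre is the product order: $\id_\bbt\colon(\bbt,a,b)\lra(\bbt,a',b')$ is a $[0,1]$-functor precisely when $a\leq a'$ and $b\leq b'$.

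For (S1), the meet is obtained from initial closedness: consider the structured source of identity maps $\id_\bbt\colon\bbt\lra(\bbt,a_i,b_i)$, whose initial lift has structure $d_{in}(x,y)=\bw_i r_i(x,y)$ and hence equals $(\bbt,\bw_i a_i,\bw_i b_i)=\bw_i(a_i,b_i)$; since every codomain lies in $\bfA$ and the lift is initial, the domain lies in $\bfA$. Dually, the join comes from final closedness applied to the sink of identities $\id_\bbt\colon(\bbt,a_i,b_i)\lra\bbt$, whose final lift is the smallest $[0,1]$-category making every identity a functor, forcing the off-diagonal entries to be $\bv_i a_i$ and $\bv_i b_i$ and giving $\bv_i(a_i,b_i)$ as codomain. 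For (S2), the swap $\tau\colon\bbt\lra\bbt$, $0\mapsto1$, $1\mapsto0$, is an isomorphism $(\bbt,a,b)\lra(\bbt,b,a)$; a single isomorphism is an initial source, so, as $\bfA$ is initially closed (hence closed under isomorphisms), $(\bbt,a,b)\in\bfA(\bbt)$ yields $(\bbt,b,a)\in\bfA(\bbt)$.

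Property (S3) is the substantive step, and here the idea is to manufacture the $\with$-product using transitivity on a three-point category and then pass to a subspace. On $X=\{0,1,2\}$ I would take the sink $f_1\colon(\bbt,a_1,b_1)\lra X$ with $0\mapsto0,\,1\mapsto1$ and $f_2\colon(\bbt,a_2,b_2)\lra X$ with $0\mapsto1,\,1\mapsto2$, and compute its final lift $(X,t)$. The constraints $t(0,1)\geq a_1$, $t(1,0)\geq b_1$, $t(1,2)\geq a_2$, $t(2,1)\geq b_2$, together with transitivity, force $t(0,2)\geq t(1,2)\with t(0,1)\geq a_1\with a_2$ and $t(2,0)\geq b_1\with b_2$ in any admissible structure; taking these as equalities gives a $[0,1]$-category, which is therefore the smallest one and hence the final lift. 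Final closedness yields $(X,t)\in\bfA$. Finally, the initial lift of the inclusion $\{0,2\}\hookrightarrow X$ is the subspace $(\bbt,a_1\with a_2,b_1\with b_2)$, so initial closedness gives $(\bbt,a_1\with a_2,b_1\with b_2)\in\bfA(\bbt)$.

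The main obstacle is precisely the (S3) computation: one must arrange the three-point configuration so that exactly the two outer homs become $\with$-products, and then verify that the explicitly described structure $t$ really is the least $[0,1]$-category above the four constraints, i.e. that no longer composite path beats the direct two-step path $0\to1\to2$ (and its reverse). This is where transitivity and the quantale structure of $\with$ enter; once $t$ is identified, the remainder is a routine application of the two closure properties, and (S1) and (S2) are essentially immediate from the lift formulas.
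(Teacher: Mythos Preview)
Your proposal is correct and follows essentially the same route as the paper: initial/final lifts of identity maps for (S1), the swap $\tau$ for (S2), and the three-point final lift followed by an initial sub-object for (S3). The only cosmetic difference is that for (S2) the paper treats $\tau$ as a final sink rather than as an isomorphism/initial source, but this is immaterial.
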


\begin{proof}
(S1): It can be easily verified that the sink $\{\id_\bbt: (\bbt, a_i,b_i)\lra{(\bbt,a,b)}\}_{i\in I}$ with $a=\bigvee a_i$ and $b=\bigvee b_i$ is final in $\RCat$, since $\bfA$ is finally closed by Proposition \ref{icfc}, it follows that $(\bv a_i,\bv b_i)\in \bfA(\bbt)$ whenever all $(a_i,b_i)\in\bfA(\bbt)$.  Dually, since the source $\{\id_\bbt: (\bbt, a,b)\lra{(\bbt,a_i,b_i)}\}_{i\in I}$ with $a=\bigwedge a_i$ and $b=\bigwedge b_i$ is initial in $\RCat$ and $\bfA$ is initially closed, it follows that $(\bigwedge a_i,\bigwedge b_i)\in \bfA(\bbt)$ whenever all $(a_i,b_i)\in\bfA(\bbt)$.

(S2): Consider the map $\tau:\bbt\lra\bbt$, $\tau(0)=1$ and $\tau(1)=0$. Then $\tau:(\bbt,a,b)\lra{(\bbt,b,a)}$ is a final sink in $\RCat$. Thus, one obtains that $(b,a)\in\bfA(\bbt)$ if  $(a,b)\in\bfA(\bbt)$.

(S3): Let $X=\{x,y,z\}$. Consider maps $f_1:\bbt\lra X, f_1(0)=x, f_1(1)=y$ and $f_2:\bbt\lra X, f_2(0)=y,f_2(1)=z$. Since $\RCat$ is topological, the sink $\{f_i: (\bbt,a_i,b_i)\lra X|i=1,2\}$ has a unique final lift $\{f_i: (\bbt,a_i,b_i)\lra(X,d_{fin})|i=1,2\}$  in $\RCat$. Clearly, it holds that $d_{fin}(x,z)=a_1\with a_2$ and $d_{fin}(z,x)=b_1\with b_2$. Let $h:(\bbt,a_1\with a_2,b_1\with b_2)\lra (X,d_{fin})$ be a source with $h(0)=x$ and $h(1)=z$, it is initial in $\RCat$. Thus, if both $(a_1,b_1)$ and $(a_2,b_2)$ are in $\bfA(\bbt)$, then both $(X,d_{fin})$ and $(\bbt,a_1\with a_2,b_1\with b_2)$ are also in $\bfA$. Hence, $(a_1\with a_2,b_1\with b_2)\in\bfA(\bbt)$ as desired.
\end{proof}


 
 A subset $S$ of $[0,1]^2$ satisfying the above conditions $(S1)-(S3)$ is called  a (symmetric) \emph{suitable }\cite{Lowen2015} subset of $[0,1]^2$.  Given a suitable subset $S$ of $[0,1]^2$,  collect all objects $(X,r)$ in $\RCat$ such that 
$$\forall x,y\in X,\ (r(x,y),r(y,x))\in S,$$ then one obtains a full subconstruct  of  $\RCat$, denoted by $$\RCat_{S}.$$

\begin{exmp} Let $\with$ be a continuous t-norm. We list some suitable subsets of $[0,1]^2$ and the  subconstructs $\RCat_S$ of $\RCat$.

\begin{enumerate}[label=\arabic*.]
\item[(1)]  	A subset $K\subseteq[0,1]$ is called a \emph{complete sublattice} of $[0,1]$ if $K$ is closed under arbitrary joins and meets.  If $K$ is at the same time closed under the binary operation $\with$, then the triple
$(K,\with,1)$ is a quantale, called a complete subquantale of $([0,1],\with,1)$. A $[0,1]$-category $(X,r)$ satisfying that $r(x,y)\in K$ for all $x,y\in X$ is called a $K$-category. Thus, all $K$-categories constitute a full subcategory of $\RCat$, which is denoted by $\KCat$.

Let $(K,\with,1)$ be a complete subquantale of $([0,1],\with,1)$, then the product $K^2$   is a suitable subset of $[0,1]^2$. Clearly, one has that a real-enriched category $(X,r)\in\RCat_{K^2}$ if and only if $r(x,y)\in K$ for all $x$ and $y$ in $X$. Therefore, one has that 
 $$\RCat_{K^2}=\KCat.$$ 
Particularly, when $K=\bbt=\{0,1\}$,   one has that 
  \[\RCat_{\bbt^2}=\bbt\text{-}\Cat\cong\Ord.\]

Let $K_\Delta=\{(p,p)|p\in K\}$. Then $K_\Delta$  is also a suitable subset of $[0,1]^2$. In this case, a 
 $[0,1]$-category $(X,r)\in\RCat_{K_\Delta}$ if and only if $(X,r)\in\KCat$ and $r(x,y)=r(y,x)$ for all $x,y\in X$, that is, $(X,r)$ is a symmetric $K$-category. 

\item[(2)] Since $\with$ is a continuous t-norm, for each $x\in [0,1]$, the set $\{z\in[0,1]\mid z\&z\leq x\}$ has the maximum.  Define the square root of $x$ by  $$x_{_\&}^{^\frac{1}{2}}=\max\{z\in[0,1]|z\with z\leq x\},$$
then 
$$S=\{(x,y)\in[0,1]^2|x\with x\leq y\leq x_{_\&}^{^\frac{1}{2}}\}$$
is a suitable subset of $[0,1]^2$.  It determines a stable  subconstruct $\RCat_S$ whose objects are those $[0,1]$-categories $(X,r)$  satisfying 
 $$r(x,y)\with r(x,y)\leq r(y,x)\leq(r(x,y))_{_\&}^{^\frac{1}{2}}$$
 for all $x,y\in X$. 
\end{enumerate}

\end{exmp}

\begin{prop}\label{stable sub}
Let $\with$ be a continuous t-norm.  A  full subconstruct $\bfA$ of $\RCat$ is stable if and only if there is a suitable subset $S$ of $[0,1]^2$ such that $\bfA=\RCat_S$.
\end{prop}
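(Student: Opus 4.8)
The plan is to prove both implications by combining Proposition \ref{icfc}, which reduces stability to being \emph{simultaneously initially closed and finally closed}, with Proposition \ref{suit-set} and the stated fact that the fibre $\RCat(\bbt)$ is finally dense in $\RCat$. Throughout, I use that a source in the topological construct $\RCat$ is initial exactly when its domain carries the initial ($d_{in}$) structure, and a sink is final exactly when its codomain carries the final ($d_{fin}$) structure.

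For necessity, assume $\bfA$ is stable and put $S:=\bfA(\bbt)$, which is suitable by Proposition \ref{suit-set}. I would prove $\bfA=\RCat_S$ by two inclusions. For $\bfA\subseteq\RCat_S$: given $(X,r)\in\bfA$ and $x,y\in X$, the map $e\colon\bbt\to X$ with $e(0)=x,\ e(1)=y$ has initial lift $(\bbt,r(x,y),r(y,x))$; this single-morphism source is initial with codomain in $\bfA$, so initial closedness forces its domain into $\bfA$, i.e.\ $(r(x,y),r(y,x))\in\bfA(\bbt)=S$, whence $(X,r)\in\RCat_S$. For $\RCat_S\subseteq\bfA$: given $(X,r)\in\RCat_S$, realize it as the codomain of a final sink of two-point objects. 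For every $(x,y)\in X^2$ take $e_{x,y}\colon\bbt\to X$, $e_{x,y}(0)=x,\ e_{x,y}(1)=y$, with domain $(\bbt,r(x,y),r(y,x))$, which lies in $\bfA$ because $(r(x,y),r(y,x))\in S$. A short computation shows the generating relation of this sink equals $r$ itself, and since $r$ is already $\with$-transitive, $d_{fin}=r$, so the sink is final; final closedness then gives $(X,r)\in\bfA$. (The empty and one-point objects are handled separately, noting that any nonempty object of $\bfA$ forces $(1,1)\in S$ via the initial lift along a constant map $\bbt\to X$.)

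For sufficiency, let $S$ be suitable; I would check $\RCat_S$ is initially and finally closed. Initial closedness is immediate: for an initial source $\{f_i\colon(X,r)\to(X_i,r_i)\}$ with each codomain in $\RCat_S$, initiality gives $r(x,y)=\bigwedge_i r_i(f_i(x),f_i(y))$ and $r(y,x)=\bigwedge_i r_i(f_i(y),f_i(x))$, so $(r(x,y),r(y,x))$ is a componentwise meet of members of $S$, hence lies in $S$ by (S1). The heart of the argument is final closedness. Here I would first replace the implicit definition of $d_{fin}$ by its explicit transitive-closure form: writing $g(u,v)=\bigvee\{r_i(p,q)\mid f_i(p)=u,\ f_i(q)=v\}$ for the generating relation, one has $d_{fin}(u,v)=\bigvee g(w_0,w_1)\with\cdots\with g(w_{n-1},w_n)$ over all finite chains $u=w_0,\dots,w_n=v$. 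Verifying that this chain closure coincides with the $d_{fin}$ of the Example (it is a $[0,1]$-category dominating $g$, and every admissible structure dominates it) is the main technical step.

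Granting the chain description, the desired membership $(d_{fin}(u,v),d_{fin}(v,u))\in S$ is obtained by threading the closure properties (S1)--(S3) through the formula. For a single link, the family $\{(r_i(p,q),r_i(q,p))\mid f_i(p)=u,\ f_i(q)=v\}$ lies in $S$ since each $(X_i,r_i)\in\RCat_S$; its componentwise join lies in $S$ by (S1), and the reindexing $(p,q)\mapsto(q,p)$ identifies that join with $(g(u,v),g(v,u))$, so $(g(u,v),g(v,u))\in S$ (with (S2) guaranteeing the symmetric consistency of the two components). Applying (S3) along a chain, together with commutativity of $\with$, shows that for each chain $u=w_0,\dots,w_n=v$ the pair formed by its value and the value of the reversed chain lies in $S$; finally, taking joins over all chains and using that reversal is a bijection between chains $u\to v$ and chains $v\to u$, one more application of (S1) yields $(d_{fin}(u,v),d_{fin}(v,u))\in S$. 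Thus $(X,d_{fin})\in\RCat_S$, so $\RCat_S$ is finally closed, and Proposition \ref{icfc} gives stability. The main obstacle is precisely this final-closure step: securing a workable (chain) description of $d_{fin}$ and managing the reverse-chain bookkeeping so that (S1)--(S3) apply componentwise to the pair $(d_{fin}(u,v),d_{fin}(v,u))$.
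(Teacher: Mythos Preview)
Your necessity argument is essentially identical to the paper's: both pick $S=\bfA(\bbt)$, use the single initial source $e_{xy}\colon(\bbt,r(x,y),r(y,x))\to(X,r)$ to get $\bfA\subseteq\RCat_S$, and the final sink $\{e_{xy}\}_{x,y}$ to get the reverse inclusion.

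For sufficiency you take a genuinely different route. The paper does \emph{not} invoke Proposition~\ref{icfc}; instead it exhibits the concrete coreflection and reflection directly, setting
\[
C(r)(x,y)=\bigvee\{p\mid (p,q)\in S,\ (p,q)\le(r(x,y),r(y,x))\},\qquad
R(r)(x,y)=\bigwedge\{s(x,y)\mid (X,s)\in\RCat_S,\ r\le s\},
\]
and checks that $(X,C(r))$ and $(X,R(r))$ lie in $\RCat_S$ using (S1)--(S3). Your approach instead verifies initial and final closedness and then appeals to Proposition~\ref{icfc}. Both are correct. The paper's argument is shorter and has the side benefit of producing explicit formulas for the (co)reflectors; in particular it avoids any analysis of $d_{fin}$. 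Your approach is more uniform with the surrounding narrative (Proposition~\ref{icfc} and the final-density of $\RCat(\bbt)$ were set up precisely for this), but the price is the chain description of $d_{fin}$ and the reverse-chain bookkeeping you flag as the ``main obstacle''. That bookkeeping is fine as you outline it: $(g(u,v),g(v,u))$ is a componentwise join of pairs in $S$ (hence in $S$ by (S1), with (S2) aligning the two index sets), (S3) handles a single chain against its reversal, and one more (S1)-join over chains finishes; the empty-sink and isolated-point cases are covered because $(0,0)=\bigvee\varnothing$ and $(1,1)=\bigwedge\varnothing$ lie in $S$.
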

\begin{proof}
The proof is similar to that in \cite[Theorem 12.1.14]{Lowen2015} for quasi-metric spaces. We present here a sketch of the proof  for the convenience to readers. 

\emph{Necessity}: By Proposition \ref{suit-set}, $\bfA(\bbt)$ is a suitable subset of $[0,1]^2$. We check that $\bfA=\RCat_{\bfA(\bbt)}$. Let  $(X,r)$ be a $[0,1]$-category. For all $x$ and $y$ in $X$, define a map $$f_{xy}:\bbt\lra X,\quad f_{xy}(0)=x \text{ and } f_{xy}(1)=y.$$ 
 On one hand, one can see that each arrow 
 $$f_{xy}:(\bbt,r(x,y),r(y,x))\lra(X,r)$$ is an initial source in $\RCat$. Thus, the pair $(r(x,y),r(y,x))$ is in $\bfA(\bbt)$ whenever $(X,r)$ is in $\bfA$, which implies that the $[0,1]$-category $(X,r)$ is also in $\RCat_{\bfA(\bbt)}$. On the other hand, one can see that
 $$\{f_{xy}:(\bbt,r(x,y),r(y,x))\lra(X,r)|x,y\in X\}$$ is a final sink in $\RCat$. Thus $(X,r)$ is in $\bfA$ whenever $(X,r)$ lies in $\RCat_{\bfA(\bbt)}$.  
 

\emph{Sufficiency}: Let $S$ be a suitable subset of $[0,1]^2$, we  show that  $\RCat_S$ is stable in $\RCat$. 

Firstly, for a $[0,1]$-category $(X,r)$ in $\RCat$, let 
$$C(r)(x,y)=\bigvee\{p\in Q|(p,q)\in S\text{ and } (p,q)\leq (r(x,y),r(y,x))\}$$ for all $x$ and $y$ in $X$.  It is clear that $(C(r)(x,y),C(r)(y,x))\in S$ for all $x,y\in X$. Notice that 
$$C(r)(x,y)\with C(r)(y,z)\leq r(x,y)\with r(y,z)\leq r(x,z)$$
 and the pair $$(C(r)(x,y)\with C(r)(y,z),C(r)(z,y)\with C(r)(y,x))$$ lies in $S$ for all $x$, $y$ and $z$ in $X$. Thus, one has that 
 $$C(r)(x,y)\with C(r)(y,z)\leq C(r)(x,z),$$  that is, $(X,C(r))$ lies in $\RCat_S$. Therefore,  
 $$\id_X:(X,C(r))\lra(X,r)$$ is the coreflection arrow.
 
 Secondly, for a $[0,1]$-category $(X,r)$, let 
 $$R(r)(x,y)=\bigwedge\{s(x,y)| (X,s)\in\RCat_S\text{ and } r(x,y)\leq s(x,y)\}$$ for all $x$ and $y$ in $X$. One can easily check that $(X,R(r))$ is also in $\RCat_S$. Therefore,
 $$\id_X:(X,r)\lra(X,R(r))$$ is the reflection arrow. 
\end{proof}

\begin{cor}
  A stable subconstruct $\bfA$ of $\RCat$ contains $\Ord$ as a subconstruct if and only if there is a complete subquantale $(K,\with,1)$ of $([0,1],\with,1)$  such that $\bfA=\KCat$.
\end{cor}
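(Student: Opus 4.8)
The plan is to translate the statement into a condition on suitable subsets via Proposition \ref{stable sub} and then analyse that condition. By Proposition \ref{stable sub}, every stable subconstruct is of the form $\bfA=\RCat_S$ for the suitable set $S=\bfA(\bbt)$, and one checks easily that $\RCat_{S_1}\subseteq\RCat_{S_2}$ if and only if $S_1\subseteq S_2$ (the object $(\bbt,a,b)$ witnesses $(a,b)\in S_2$ whenever $(a,b)\in S_1$). Since $\pi$ identifies $\Ord$ with $\RCat_{\bbt^2}=\bbt\text{-}\Cat$ --- every preorder yields values in $\{0,1\}$, and the four pairs of $\bbt^2$ are all realized by preorders on a two-point set --- I would first record that $\bfA$ contains $\Ord$ as a subconstruct precisely when $\bbt^2\subseteq S$. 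Thus the corollary reduces to the claim that a suitable subset $S$ contains $\bbt^2$ if and only if $S=K^2$ for some complete subquantale $K$.

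For the sufficiency, if $(K,\with,1)$ is a complete subquantale then $0=\bv\emptyset$ and the unit $1$ both lie in $K$, so $\bbt^2\subseteq K^2=S$, and $\RCat_{K^2}=\KCat$ was already observed in the Example. The substance is the necessity, and the key step --- indeed the only place where the hypothesis $\bbt^2\subseteq S$ is genuinely used --- is to show that such an $S$ splits as a product. I would put $K=\{a\in[0,1]\mid (a,0)\in S\}$ and prove $S=K^2$. For $S\subseteq K^2$ I would use the ``projection trick'': given $(a,b)\in S$, combining it with $(1,0)\in S$ under (S3) gives $(a\with 1, b\with 0)=(a,0)\in S$, so $a\in K$; combining instead with $(0,1)\in S$ gives $(0,b)\in S$, whence $(b,0)\in S$ by (S2) and $b\in K$. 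For the reverse inclusion I would recombine: if $a,b\in K$ then $(a,0),(b,0)\in S$, so $(0,b)\in S$ by (S2), and $(a,0)\vee(0,b)=(a,b)\in S$ by (S1).

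It then remains to verify that $K$ is a complete subquantale directly from (S1)--(S3): $0,1\in K$ because $\bbt^2\subseteq S$; closure under arbitrary joins and meets follows by applying (S1) to the family $\{(a_i,0)\}$; and closure under $\with$ follows by applying (S3) to $(a,0)$ and $(b,0)$, which yields $(a\with b,0)\in S$. Hence $S=K^2$ with $K$ a complete subquantale, and $\bfA=\RCat_S=\RCat_{K^2}=\KCat$. I do not expect any serious obstacle; the one idea to isolate is the correct definition of $K$ together with the multiply-then-join argument that forces $S$ to be a square, which is exactly what fails for the non-product suitable sets of the Example such as the diagonal $K_\Delta$ or the square-root set.
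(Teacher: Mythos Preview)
Your argument is correct and follows essentially the same route as the paper: reduce to suitable sets via Proposition~\ref{stable sub}, observe that containing $\Ord$ amounts to $\bbt^2\subseteq S$, define $K$ from $S$, and prove $S=K^2$ by projecting with $(1,0),(0,1)$ and recombining with joins. The only cosmetic difference is that the paper defines $K$ as the first projection $\{a\mid (a,b)\in S\}$ and obtains $(a,0)$ via the meet $(a,m)\wedge(1,0)$ from (S1), whereas you define $K=\{a\mid (a,0)\in S\}$ and obtain $(a,0)$ via the product $(a,b)\with(1,0)$ from (S3); since $b\wedge 0=b\with 0=0$, the two projections coincide and the arguments are interchangeable.
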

\begin{proof}
It suffices to show the necessity. Let $\bfA$ be a stable subconstruct of $\RCat$ containing $\Ord$, by the above Proposition \ref{stable sub},  $\bfA=\RCat_S$ for some suitable subset $S$ of $[0,1]^2$. Then it holds that $\bbt^2=\Ord(\bbt)\subseteq \bfA(\bbt)=S$. Define $$K=\{a\mid(a,b)\in S\},$$ it is clearly a complete sublattice of 
$[0,1]$ and $(K,\with,1)$ is a complete subquantale of $([0,1],\with,1)$. Moreover, $S\subseteq{K^2}$.  For each $(a,b)\in K^2$, there exists $m$ and $n$ with $(a,m)\in S$ and $(b,n)\in S$ such that \[(a,b)=((a,m)\wedge(1,0))\vee((n,b)\wedge(0,1)) \in S\] since $S$ is suitable, it follows that $K^2=S$. Hence $\bfA=\RCat_{K^2}=\KCat$.
\end{proof}


\section{Closed stable subconstructs of $\RCat$}

Let $(X,r)$ and $(Y,s)$ be $[0,1]$-categories. 
Define 
\[r\otimes s((x,y),(x',y'))=r(x,x')\with s(y,y')\] for all $(x,y), (x',y')\in X\times Y$. Then $(X\times Y,r\otimes s)$ is a $[0,1]$-category, called the  \emph{tensor product} of $(X,r)$ and $(Y,s)$, and is denoted by $(X,r)\otimes (Y,s)$.  

Denote the set of all $[0,1]$-functors from $(X,r)$ to $(Y,s)$ by $[(X,r),(Y,s)]$. Equipped with the real-enriched relation 
$$d_\otimes:[(X,r),(Y,s)]^2\to[0,1], \quad 
d_\otimes(f,g)=\bw_{x\in X}s(f(x),g(x)),$$
then $([(X,r),(Y,s)],d_\otimes)$ is a $[0,1]$-category.
For a given $[0,1]$-category $(X,r)$,  the assignment $(Y,s)\mapsto ([(X,r),(Y,s)],d_\otimes)$ is a functor  on $\RCat$, which is the right adjoint of $(X,r)\otimes(-)$.   Therefore, together with the terminal object $\vmathbb{1}=(\{\star\},e)$ where $e(\star,\star)=1$, $(\RCat,\otimes,\vmathbb{1})$ is a monoidal closed category \cite{Wagner1997}. 

 \begin{prop} \label{stab are monclo} All  stable subconstructs of $\RCat$ are    monoidal closed.
\end{prop}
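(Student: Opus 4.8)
The plan is to invoke the structure theorem just established: by Proposition~\ref{stable sub}, every stable subconstruct $\bfA$ of $\RCat$ is of the form $\RCat_S$ for a suitable subset $S\subseteq[0,1]^2$, i.e.\ one satisfying (S1)--(S3). Since $(\RCat,\otimes,\vmathbb{1})$ is already monoidal closed and $\RCat_S$ is a \emph{full} subconstruct, it suffices to show that this structure restricts, which I would reduce to three closure statements: $\vmathbb{1}\in\RCat_S$, the tensor of two objects of $\RCat_S$ again lies in $\RCat_S$, and the internal hom of two objects of $\RCat_S$ again lies in $\RCat_S$. Once these hold, fullness makes the tensor--hom adjunction of $\RCat$ restrict verbatim, and the coherence isomorphisms are inherited.

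For the unit object, I note that $\vmathbb{1}=(\{\star\},e)$ is the initial lift in $\RCat$ of the empty source on the one-point set; since a stable $\bfA$ is initially closed by Proposition~\ref{icfc}, this gives $\vmathbb{1}\in\RCat_S$, equivalently $(1,1)\in S$. For the tensor, given $(X,r),(Y,s)\in\RCat_S$ and points $(x,y),(x',y')\in X\times Y$, the pair to be tested is
\[
\bigl(r(x,x')\with s(y,y'),\ r(x',x)\with s(y',y)\bigr),
\]
and this lies in $S$ by (S3) applied to $(r(x,x'),r(x',x))\in S$ and $(s(y,y'),s(y',y))\in S$. Hence $(X,r)\otimes(Y,s)\in\RCat_S$.

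The key step is closure under the internal hom, and it is here that I would use (S1). For $(X,r),(Y,s)\in\RCat_S$ and $[0,1]$-functors $f,g\in[(X,r),(Y,s)]$, the relevant pair is
\[
\bigl(d_\otimes(f,g),d_\otimes(g,f)\bigr)=\Bigl(\bw_{x\in X}s(f(x),g(x)),\ \bw_{x\in X}s(g(x),f(x))\Bigr),
\]
which is precisely the componentwise meet in $[0,1]^2$ of the family $\{(s(f(x),g(x)),s(g(x),f(x)))\}_{x\in X}$. Each member lies in $S$ because $(Y,s)\in\RCat_S$, so (S1) forces the meet into $S$; thus $([(X,r),(Y,s)],d_\otimes)\in\RCat_S$.

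With all three constructions landing back in $\RCat_S$, fullness gives $\RCat_S(A,B)=\RCat(A,B)$ for objects $A,B$ of $\RCat_S$, so the natural bijection witnessing $(X,r)\otimes(-)\dashv[(X,r),-]$ in $\RCat$ restricts to $\RCat_S$, proving that $(\RCat_S,\otimes,\vmathbb{1})$ is monoidal closed. I do not expect a genuine obstacle once the structure theorem is in hand; the only points requiring care are recognizing that the hom-distances $d_\otimes$ are honest (arbitrary) meets, so that (S1) applies rather than a mere binary meet, and not overlooking the unit $\vmathbb{1}$, whose membership comes from initial closedness (equivalently, the empty meet). The three closure properties use (S1), initial closedness, and (S3) respectively, while (S2) is already built into the well-definedness of the symmetric condition defining $\RCat_S$.
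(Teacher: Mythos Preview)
Your proposal is correct and follows essentially the same route as the paper: invoke Proposition~\ref{stable sub} to write $\bfA=\RCat_S$, then verify closure under $\otimes$ via (S3), closure under the internal hom $d_\otimes$ via the arbitrary-meet clause of (S1), and membership of $\vmathbb{1}$. Your treatment is slightly more explicit than the paper's (you spell out why fullness lets the adjunction restrict and argue $\vmathbb{1}\in\RCat_S$ via initial closedness rather than dismissing it as trivial), but the argument is the same.
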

\begin{proof} Let $\mathbf{A}$ be a subconstruct of $\RCat$, then there is a suitable subset $S$ of $[0,1]^2$ such that $\mathbf{A}=\RCat_S$. For $[0,1]$-categories $(X,r)$ and $(Y,s)$ in $\mathbf{A}$, that is, $(r(x,x'),r(x',x))\in S$ and $(s(y,y'),s(y',y))\in S$ for all $x,x'\in X$ and $y,y'\in Y$. Then the pair 
$$(r\otimes s((x,y),(x',y')),r\otimes s((x',y'),(x,y))=(r(x,x'),r(x',x))\with(s(y,y'),s(y',y))\in S$$
for all $(x,y), (x',y')\in X\times Y$. And the pair \begin{align*}
    (d_\otimes(f,g),d_\otimes(g,f))&=\Big(\bw_{x\in X}s(f(x),g(x)),\bw_{x\in X}s(g(x),f(x))\Big)\\
    &=\bw_{x\in X}(s(f(x),g(x)),s(g(x),f(x)))\in S
\end{align*} for all $f,g\in[(X,r),(Y,s)]$.
Hence the tensor product $(X,r)\otimes (Y,s)$ and the function space $([(X,r),(Y,s)],d_{\otimes})$ are both in $\mathbf{A}$. Since the terminal object $\vmathbb{1}$ is contained in $\bfA$ trivially, $(\bfA,\otimes,\vmathbb{1})$ is monoidal closed.
\end{proof}

In the category $\RCat$, one can also consider the cartesian product of $[0,1]$-categories $(X,r)$ and $(Y,s)$. Define 
 $$r\times s((x,y),(x',y'))=r(x,x')\wedge s(y,y')$$ for all $(x,y),(x',y')\in X\times Y$, then $(X\times Y,r\times s)$ is the product of $(X,r)$ and $(Y,s)$ in $\RCat$. However, the category $\RCat$ is cartesian closed if and only if the t-norm $\with=\wedge$ \cite[Theorem 4.7]{Lai2016}, hence, it need not be cartesian closed in general. Our aim is to find out  the cartesian closed ones among all stable subconstructs of $\RCat$.

Recall from \cite{Acc} that a category $\bfA$ is \emph{cartesian closed }if it has finite products and for
each $\bfA$-object $A$ the functor $A\times -: \bfA\lra\bfA$ has a right adjoint. The right adjoint functor
for $A\times -$ is denoted on objects by $B\mapsto B^A$,  called power objects, and the associated co-universal arrows
are denoted by
$$\ev: A\times B^A\lra B, $$
  called evaluation morphisms.


\begin{lem}(\cite [Proposition 27.9]{Acc})\label{stable of ccc}
	Any isomorphism-closed reflective and coreflective full subcategory of a cartesian closed category is again cartesian closed.
\end{lem}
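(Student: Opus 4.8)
The plan is to exploit the two adjunctions witnessing reflectivity and coreflectivity, splitting the verification of cartesian closedness into its two ingredients: finite products and power objects. Write $I\colon\bfA\lra\bfB$ for the inclusion of the subcategory into the cartesian closed category $\bfB$, let $R\colon\bfB\lra\bfA$ be the reflector (so $R\dashv I$) and $C\colon\bfB\lra\bfA$ the coreflector (so $I\dashv C$). Throughout I would use that $\bfA$ is full and isomorphism-closed, so that $\bfA(P,Q)=\bfB(IP,IQ)$ for all $\bfA$-objects $P,Q$, and that a reflective full subcategory is closed under all limits that exist in the ambient category.

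First I would establish that $\bfA$ has finite products. Since $\bfA$ is reflective, it is closed under limits existing in $\bfB$; in particular the terminal object of $\bfB$ (the empty product) and the product $A\times B$ in $\bfB$ of any two $\bfA$-objects $A,B$ are, up to isomorphism and hence by isomorphism-closedness genuinely, objects of $\bfA$, and they serve as the terminal object and the product in $\bfA$. Thus finite products in $\bfA$ are computed exactly as in $\bfB$.

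Next, for a fixed $\bfA$-object $A$ I would build the power-object functor as $D\mapsto C(D^A)$, where $D^A$ denotes the exponential taken in $\bfB$ (which need not lie in $\bfA$) and $C$ is the coreflector. The point is the chain of natural bijections, for $\bfA$-objects $B$ and $D$,
\begin{align*}
\bfA(A\times B, D)
&= \bfB(A\times B, D)\\
&\cong \bfB(B, D^A)\\
&\cong \bfA(B, C(D^A)),
\end{align*}
where the first equality is fullness combined with the previous step (the $\bfA$-product equals the $\bfB$-product), the second isomorphism is the exponential adjunction of the cartesian closed category $\bfB$, and the third is the adjunction $I\dashv C$ witnessing coreflectivity. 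All three are natural in $B$, so $D\mapsto C(D^A)$ is right adjoint to $A\times-\colon\bfA\lra\bfA$, which is precisely what cartesian closedness of $\bfA$ requires.

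The calculational part is the naturality bookkeeping in the displayed chain; the conceptual point worth flagging is that the two halves of the hypothesis play genuinely different roles, and one must keep them from being conflated. Reflectivity is what guarantees closure under products (products being limits), while coreflectivity is what supplies the internal hom, the coreflector corepresenting on $\bfA$ the $\bfB$-valued hom into the ambient exponential. I would be careful that the product appearing in the bottom line of the chain is the reflectively-created one, and that the coreflection adjunction is applied only after it has been identified with the $\bfB$-product. The main obstacle is therefore checking that the exponential adjunction of $\bfB$, natural in $B$, restricts compatibly along $I$, so that composing it with $I\dashv C$ yields a bona fide adjunction internal to $\bfA$ rather than merely an unnatural family of bijections.
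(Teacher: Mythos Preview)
The paper does not supply a proof of this lemma; it merely cites Proposition~27.9 of \cite{Acc}. Your argument is correct and is essentially the standard proof found there: reflectivity ensures that finite products of $\bfA$-objects formed in $\bfB$ lie in $\bfA$ and serve as the products in $\bfA$, while coreflectivity is used to internalize the exponential via $C(D^A)$, with your displayed chain of natural bijections establishing the adjunction $A\times{-}\dashv C((-)^A)$.
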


To describe the structure map of the power objects in $\RCat$, we need to introduce the \emph{residuated} operation $\ra$ on $[0,1]$ that corresponds to $\wedge$. For all $x$ and $y\in [0,1]$, define  $$x\ra y=  \begin{cases}
		y,&x> y,\\
		1,&x\leq y.
		\end{cases} $$ 
  Then the binary operations $\wedge$ and $\ra$ satisfy the adjoint condition
$$x\wedge y\leq z\iff y\leq x\ra z$$ 
 for all $x$, $y$ and $z$ in $[0,1]$. By this adjoint condition, one can easily see the  properties in the below.   
\begin{prop}
\label{resd prop}
For all $x,x_i\in[0,1]$, it holds that
 \[x \ra\Big(\bw_{i \in I}x_i\Big) = \bw_{i \in I}(x \ra x_i),\quad 
\Big(\bv_{i \in I}{x_i}\Big) \ra x = \bw_{i \in I}(x_i \ra x).\]
\end{prop}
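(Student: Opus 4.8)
The plan is to derive both identities directly from the adjoint condition $x\wedge y\leq z\iff y\leq x\ra z$, using the order-theoretic principle that an element $a$ of the poset $[0,1]$ is completely determined by the set $\{w\mid w\leq a\}$: concretely, $a=b$ holds as soon as $w\leq a\iff w\leq b$ for every $w\in[0,1]$ (take $w=a$ and then $w=b$). Conceptually this is just the statement that the right adjoint $x\ra(-)$ preserves meets and that $(-)\wedge w$, being a left adjoint, preserves joins; but rather than invoke the abstract adjoint-functor facts I would carry out the two short equivalence chains explicitly, since that is what the phrase ``one can easily see'' refers to. For each identity it therefore suffices to test an arbitrary $w\in[0,1]$ against the two sides.

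For the first identity I would chain equivalences as follows: for any $w$,
\begin{align*}
w\leq x\ra\Big(\bw_{i}x_i\Big)
&\iff x\wedge w\leq\bw_i x_i
\iff \forall i,\ x\wedge w\leq x_i\\
&\iff \forall i,\ w\leq x\ra x_i
\iff w\leq\bw_i(x\ra x_i),
\end{align*}
where the first and third steps are the adjoint condition and the remaining steps are the defining universal property of the meet. This yields $x\ra(\bw_i x_i)=\bw_i(x\ra x_i)$.

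For the second identity the same strategy applies, but it requires one extra ingredient, namely the distributivity of $\wedge$ over arbitrary joins, $(\bv_i x_i)\wedge w=\bv_i(x_i\wedge w)$. This is precisely the assertion that $(-)\wedge w$ preserves joins, which is automatic here because $(-)\wedge w$ is a left adjoint, its right adjoint being $w\ra(-)$. With this in hand I would write, for any $w$,
\begin{align*}
w\leq\Big(\bv_i x_i\Big)\ra x
&\iff\Big(\bv_i x_i\Big)\wedge w\leq x
\iff\bv_i(x_i\wedge w)\leq x\\
&\iff\forall i,\ x_i\wedge w\leq x
\iff\forall i,\ w\leq x_i\ra x
\iff w\leq\bw_i(x_i\ra x),
\end{align*}
giving $(\bv_i x_i)\ra x=\bw_i(x_i\ra x)$.

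The computation is entirely routine, so there is no genuine obstacle; the only point meriting a moment's care is the distributivity step in the second identity. Even that is not a difficulty but merely a reminder that it rests on the adjunction itself (equivalently, on $[0,1]$ being a frame) rather than on anything specific to the unit interval.
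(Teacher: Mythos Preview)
Your proposal is correct and is precisely the argument the paper has in mind: the paper offers no explicit proof beyond the sentence ``By this adjoint condition, one can easily see the properties in the below,'' and your two equivalence chains are exactly the routine unpacking of that adjoint condition. Your remark that the distributivity step in the second identity is itself a consequence of the adjunction is accurate and appropriate.
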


\begin{prop}\label{powobj} 
 Suppose   $(X,r)$ and $(Y,s)$ are $[0,1]$-categories such that their power object $(Y,s)^{(X,r)}$ exists in the category $\RCat$. Then the following statements hold.
 \begin{itemize}
 \item[(1)] (\cite{Clem03,Clem06}) The power object $(Y,s)^{(X,r)}$ has an underlying set 
 $$|(Y,s)^{(X,r)}|=[(X,r),(Y,s)]$$ 
 and is equipped with the structure map 
 $$\dt:[(X,r),(Y,s)]^2\lra{K},\quad \dt(f,g)=\bigwedge_{x,y\in X}r(x,y)\ra s(f(x),g(y)).$$
 \item[(2)] For all $[0,1]$-functors $f$ and $g$ in $[(X,r),(Y,s)]$, $\dt(f,g)\leq d_\otimes(f,g)$.
 \item[(3)] If $(K,\with,1)$ is a complete subquantale of $([0,1],\with,1)$, $(X,r)$ and $(Y,s)$ are both $K$-categories, then the power object $(Y,s)^{(X,r)}$ is also a $K$-category. 
 \end{itemize}
\end{prop}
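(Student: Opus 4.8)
The plan is to take part (1) as given from \cite{Clem03,Clem06} and to derive (2) and (3) directly from the explicit formula for $\dt$. It is worth first recording \emph{why} the formula in (1) is forced, since this guides the rest. The power object is the value at $(Y,s)$ of the right adjoint to $(X,r)\times(-)$, so its points (morphisms out of the terminal object $\vmathbb{1}$) are in bijection with $\RCat\big((X,r)\times\vmathbb{1},(Y,s)\big)\cong[(X,r),(Y,s)]$, which fixes the underlying set. The structure map must then be the \emph{largest} $[0,1]$-category structure on this set for which the evaluation $\ev\colon(X,r)\times(Y,s)^{(X,r)}\lra(Y,s)$, $\ev(x,f)=f(x)$, is a $[0,1]$-functor. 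Functoriality of $\ev$ reads $r(x,y)\wedge\dt(f,g)\leq s(f(x),g(y))$ for all $x,y\in X$, and by the adjunction $a\wedge(-)\dashv a\ra(-)$ this is equivalent to $\dt(f,g)\leq r(x,y)\ra s(f(x),g(y))$ for every pair $x,y$; taking the meet over all such pairs produces exactly the displayed formula.

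For (2), I would simply restrict the defining meet of $\dt(f,g)$ to the diagonal pairs $x=y$. Since $r(x,x)=1$ and $1\ra t=t$ for every $t\in[0,1]$, each diagonal term collapses to $s(f(x),g(x))$, whence
\[\dt(f,g)=\bw_{x,y\in X}\big(r(x,y)\ra s(f(x),g(y))\big)\leq\bw_{x\in X}s(f(x),g(x))=d_\otimes(f,g).\]

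For (3), the task reduces to showing $\dt(f,g)\in K$ for all $f,g\in[(X,r),(Y,s)]$. As $K$ is a complete sublattice it is closed under arbitrary meets, so it suffices to check that each term $r(x,y)\ra s(f(x),g(y))$ lies in $K$; and since $(X,r)$ and $(Y,s)$ are $K$-categories, both $r(x,y)$ and $s(f(x),g(y))$ belong to $K$. The one point needing attention is closure of $K$ under the residuation $\ra$: by the definition of $\ra$, for $u,v\in K$ one has $u\ra v=1$ when $u\leq v$ and $u\ra v=v$ when $u>v$; in the first case $1\in K$ because $K$, being a complete sublattice, contains the empty meet, and in the second case $v\in K$, so $u\ra v\in K$ in either case. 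Hence $\dt(f,g)$ is a meet of elements of $K$ and therefore lies in $K$, so $(Y,s)^{(X,r)}$ is a $K$-category.

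The genuinely delicate input is part (1) — the existence and explicit description of the power object — which is the hard part and is borrowed from \cite{Clem03,Clem06}. Granting it, parts (2) and (3) are short computations, the only subtlety being the elementary case analysis that establishes closure of $K$ under $\ra$.
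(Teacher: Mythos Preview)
Your proposal is correct and follows essentially the same approach as the paper: part (1) is cited, part (2) is proved by restricting the meet to diagonal pairs $x=y$, and part (3) follows from the definition of $\ra$. In fact you give more detail than the paper does---the paper dispatches (3) with ``obvious from the definition of $\ra$'', whereas you spell out the case analysis showing $K$ is closed under $\ra$, and you also include a helpful explanation of why the formula in (1) is forced.
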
 
\begin{proof}
    $(2):$ Let $f,g\in [(X,r),(Y,s)]$. It holds that 
    \begin{align*}
        \dt(f,g)&=\bw_{x,y\in X}r(x,y)\ra s(f(x),g(y))\\
        &\leq\bw_{x\in X}r(x,x)\ra s(f(x),g(x))\\
        &=\bw_{x\in X}s(f(x),g(x))\\
        &=d_\otimes(f,g).
    \end{align*}
    $(3):$ It is obvious from the definition of $\ra$.
\end{proof}

For a $K$-category $(X,r)$, the functor $X\times-:\KCat\lra\KCat$ has a right adjoint if and only if the  $[0,1]$-functor from $(X,r)$ to the terminal object $\vmathbb{1}$  has a right adjoint in the sense of \cite{Clem03,Clem06,Tholen1987}.  Therefore, as a special case of the Theorem 3.4 in \cite{Clem06}, one obtains the following proposition. 
 
\begin{prop} (\cite{Clem06})\label{exp ob}
	 Suppose $(K,\with,1)$ is a complete subquantale of $([0,1],\with,1)$ and $(X,r)$ is a $K$-category. The functor $(X,r)\times(-):\KCat\lra\KCat$ has a right adjoint if and only if it holds that
 $$(u\with v)\wedge r(x,y)\leq\bigvee_{z\in X}(u\wedge r(x,z))\with(v\wedge r(z,y))$$ 
 for all $u,v\in K$ and $x,y\in X$.
\end{prop}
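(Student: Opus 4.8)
The plan is to pin down the only possible right adjoint using Proposition~\ref{powobj} and then decide when it actually works. If $(X,r)\times(-)$ has any right adjoint, the power object of each $(Y,s)$ is forced by Proposition~\ref{powobj}(1) to have underlying set $[(X,r),(Y,s)]$ and structure map $\dt(f,g)=\bw_{a,b\in X}r(a,b)\ra s(f(a),g(b))$. So the whole statement reduces to the single question: \emph{for which $(X,r)$ is $([(X,r),(Y,s)],\dt)$ a $K$-category for every $(Y,s)$}, the evaluation $\ev(x,f)=f(x)$ then being co-universal. I would treat the two implications separately, giving an explicit construction for sufficiency and reading the inequality off the triangle law of $\dt$ for necessity.

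\textbf{Sufficiency.} For this direction I would assume the displayed inequality and verify the candidate directly. Reflexivity $\dt(f,f)=1$ is immediate since $f$ is a $[0,1]$-functor, and all values of $\dt$ lie in $K$ because $K$ is closed under arbitrary meets and under $\ra$ (cf.\ Proposition~\ref{powobj}(3)); the one substantial point is the triangle law $\dt(g,h)\with\dt(f,g)\le\dt(f,h)$. By the adjoint condition $a\wedge b\le c\iff b\le a\ra c$, this is equivalent, for each fixed $x,y\in X$, to
\[
\big(\dt(f,g)\with\dt(g,h)\big)\wedge r(x,y)\le s(f(x),h(y)).
\]
Applying the hypothesis with $u=\dt(f,g)$ and $v=\dt(g,h)$ bounds the left-hand side by $\bv_{z\in X}\big(\dt(f,g)\wedge r(x,z)\big)\with\big(\dt(g,h)\wedge r(z,y)\big)$, and each summand is at most $s(f(x),g(z))\with s(g(z),h(y))\le s(f(x),h(y))$ by the definition of $\dt$ and the triangle law in $(Y,s)$. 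Thus $\dt$ is a genuine $K$-category structure. That $\ev$ is a $[0,1]$-functor, and that each $h\colon(X,r)\times(Z,t)\to(Y,s)$ transposes uniquely to $\bar h\colon(Z,t)\to(Y,s)^{(X,r)}$, $\bar h(z)=h(-,z)$, then uses only the $\wedge\dashv\ra$ adjunction and the functoriality of $h$; notably these verifications do \emph{not} use the hypothesis, which is precisely why the hypothesis is needed only to guarantee that the candidate power object is a legitimate object of $\KCat$.

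\textbf{Necessity.} Conversely, suppose the right adjoint exists. Then the power object of every $(Y,s)$ exists and, by Proposition~\ref{powobj}(1), carries the structure $\dt$, so $\dt$ satisfies the triangle law for all $(Y,s)$. Unwinding that law through the same adjunction yields, for every pair of $[0,1]$-functors, an instance of the displayed inequality in which $u,v$ happen to be $\dt$-distances. To reach the inequality for \emph{arbitrary} $u,v\in K$ I would feed in test $K$-categories $(Y,s)$ with functors $f,g,h$ engineered so that $\dt(f,g)\ge u$ and $\dt(g,h)\ge v$ while $s(f(x),h(y))$ is pushed down to the right-hand join; chaining $(u\with v)\wedge r(x,y)\le(\dt(f,g)\with\dt(g,h))\wedge r(x,y)\le s(f(x),h(y))$ with that bound then yields the condition. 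Equivalently, I may invoke the reduction recorded before the statement: $(X,r)\times(-)$ has a right adjoint iff the terminal $[0,1]$-functor $(X,r)\to\vmathbb{1}$ is exponentiable, and the displayed inequality is exactly the specialization of the criterion in \cite[Theorem~3.4]{Clem06}.

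\textbf{Main obstacle.} Sufficiency is a direct, if slightly bookkeeping-heavy, verification in which the hypothesis enters at a single identifiable point, the triangle law for $\dt$. The genuine difficulty is the necessity direction: extracting the inequality for \emph{all} $u,v\in K$ from the bare existence of the adjoint, since arbitrary pairs $u,v$ cannot be realized as $\dt$-distances into a fixed $(Y,s)$, so one must either construct the right test data or appeal to the general exponentiability theorem. Beneath all of this lies the tension between the two operations: the cartesian product is controlled by $\wedge$ and its residuation $\ra$, whereas $\with$ enters only through the triangle law of the $K$-categories involved, and the displayed inequality is exactly the measure of their compatibility, which becomes vacuous precisely when $\with=\wedge$.
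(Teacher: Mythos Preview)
The paper does not prove this proposition at all: it is stated with the citation \cite{Clem06} and the preceding sentence records that it is ``a special case of the Theorem 3.4 in \cite{Clem06}'' via the reduction of $(X,r)\times(-)$ having a right adjoint to exponentiability of the terminal map $(X,r)\to\vmathbb{1}$. Your proposal therefore goes well beyond what the paper does. Your sufficiency argument is a correct, self-contained verification: the only place the hypothesis is needed is indeed the triangle law for $\dt$, and your derivation of it via the $\wedge\dashv\ra$ adjunction is sound; the checks that $\ev$ is a $[0,1]$-functor and that transposes exist are likewise straightforward and hypothesis-free, so your reduction ``the only question is whether $\dt$ is a $K$-category structure'' is justified. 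For necessity, your second option---invoking the reduction to exponentiability of the terminal map and then \cite[Theorem~3.4]{Clem06}---is exactly the paper's route. Your first option, manufacturing test $K$-categories $(Y,s)$ with $f,g,h$ realizing $\dt(f,g)\geq u$, $\dt(g,h)\geq v$ and $s(f(x),h(y))$ equal to the right-hand join, is only a sketch here; it can be made to work, but the construction is not entirely trivial (one has to control \emph{all} the infima defining $\dt$, not just a single term), so as written that branch is incomplete. In sum: your sufficiency is a genuine addition over the paper's bare citation, and your necessity, once you commit to the \cite{Clem06} route, coincides with the paper's approach.
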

\begin{thm}\label{ccc}
    Let $(K,\with,1)$ be a complete subquantale of $([0,1],\with,1)$. The following statements are equivalent:
 \begin{enumerate} 
 \item[(1)] the category $\KCat$ is cartesian closed;
 \item[(2)] the category $\RCat_{K_\Delta}$ is cartesian closed;
 \item[(3)] for all $u,v,r\in K$, it holds that  $$(u\with v )\wedge r=((u\wedge r)\with v)\vee ((v\wedge r)\with u);$$ 
 \item[(4)] $a\with a$ is idempotent for all $a\in K$.
\end{enumerate}
\end{thm}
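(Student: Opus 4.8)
The plan is to prove the cycle $(3)\Rightarrow(1)\Rightarrow(2)\Rightarrow(3)$ together with the equivalence $(3)\Leftrightarrow(4)$; these jointly give all four equivalences. First, for $(3)\Rightarrow(1)$ I would feed condition $(3)$ into the exponentiability criterion of Proposition \ref{exp ob}. Given any $K$-category $(X,r)$ and $u,v\in K$, $x,y\in X$, I restrict the join $\bigvee_{z\in X}(u\wedge r(x,z))\with(v\wedge r(z,y))$ to the two terms $z=x$ and $z=y$; since $r(x,x)=r(y,y)=1$ these terms equal $(v\wedge r(x,y))\with u$ and $(u\wedge r(x,y))\with v$, whose join is $(u\with v)\wedge r(x,y)$ precisely by $(3)$ (using commutativity of $\with$). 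Hence the Proposition \ref{exp ob} inequality holds at every object, so every $(X,r)$ is exponentiable; as $\KCat$ also has finite products (meets preserve $K$-values), it is cartesian closed.

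For $(1)\Rightarrow(2)$ I would observe that $\RCat_{K_\Delta}$ is a full, isomorphism-closed, reflective and coreflective subconstruct of $\KCat=\RCat_{K^2}$: following the formula in Proposition \ref{stable sub}, the coreflection of $(X,r)$ is the symmetric $K$-category with structure $r(x,y)\wedge r(y,x)\in K$, and the reflection is obtained dually. Lemma \ref{stable of ccc} then transports cartesian closedness from $\KCat$ to $\RCat_{K_\Delta}$. For $(2)\Rightarrow(3)$ I would use the two-point symmetric object $A=(\bbt,r_A)$ with $r_A(0,1)=r_A(1,0)=r$, which is exponentiable once $\RCat_{K_\Delta}$ is cartesian closed; hence $A\times(-)$, being a left adjoint, preserves final sinks. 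Applying this to the three-point final sink of the (S3) construction in Proposition \ref{suit-set} (whose final lift $d_{fin}$ remains symmetric and $K$-valued, so it lives in $\RCat_{K_\Delta}$, with $d_{fin}(x,z)=u\with v$), I would compare the product structure $r_A\times d_{fin}$ with the final structure of $\{A\times f_i\}$ at the pair $\big((0,x),(1,z)\big)$: the product value is $(u\with v)\wedge r$, while the final value, read off the two paths through $(0,y)$ and $(1,y)$, is $((v\wedge r)\with u)\vee((u\wedge r)\with v)$. Preservation of finality forces these to agree, which is exactly $(3)$.

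For $(3)\Rightarrow(4)$ I would substitute $u=v=a$ and $r=a\with a$ into $(3)$; since $a\with a\le a$, the identity collapses to $a\with a=(a\with a)\with a$, and iterating (multiplying again by $a$) gives $(a\with a)\with(a\with a)=a\with a$, i.e.\ $a\with a$ is idempotent. The converse $(4)\Rightarrow(3)$ is the heart of the matter. The inequality $\ge$ in $(3)$ is automatic, so only $(u\with v)\wedge r\le((u\wedge r)\with v)\vee((v\wedge r)\with u)$ is at stake, and by the symmetry of both sides in $u,v$ I may assume $u\le v$. Here I would split along the ordinal sum of Theorem \ref{Ordinal sum}. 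If $u\with v=u\wedge v$, then there is an idempotent $p$ with $u\le p\le v$, and Lemma \ref{contain idem} gives $(u\wedge r)\with v=u\wedge r$, settling this case without even invoking $(4)$. Otherwise $u,v$ lie strictly inside a common Archimedean block with $u\with v<u\wedge v$.

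The hard part will be this block-by-block analysis. Computing $M=\{a\mid a\with a \text{ idempotent}\}$ inside each block shows that $M$ meets no product block in its interior and meets a {\L}ukasiewicz block $[a_i,b_i]$ exactly in its lower half $(a_i,m_i]\cup\{b_i\}$ (with $m_i$ the midpoint). Thus hypothesis $(4)$, namely $K\subseteq M$, excludes product blocks entirely and confines $u,v$ to $(a_i,m_i]$ in a {\L}ukasiewicz block, where $u\with v=a_i$ is the bottom idempotent. A short check, distinguishing $r\ge a_i$ (where $(v\wedge r)\with u\ge a_i\with a_i=a_i$) from $r<a_i$ (where Lemma \ref{contain idem} gives $(v\wedge r)\with u=r$), then yields $(u\with v)\wedge r=a_i\wedge r\le(v\wedge r)\with u$, establishing $(3)$. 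I expect the delicate step to be precisely this bookkeeping: pinning down $M$ within each Archimedean block and verifying that $K\subseteq M$ is exactly the condition ruling out the configurations in which $(3)$ would otherwise fail.
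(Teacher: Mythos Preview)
Your proposal is correct and follows essentially the same route as the paper: the implications $(1)\Rightarrow(2)$, $(2)\Rightarrow(3)$ and $(3)\Rightarrow(4)$ are argued identically (stable subconstruct plus Lemma~\ref{stable of ccc}; preservation of final sinks applied to the three-point construction; the substitution $u=v=a$, $r=a\with a$). The only difference is organizational: the paper closes the cycle with a direct $(4)\Rightarrow(1)$ via Proposition~\ref{exp ob}, whereas you factor this as $(4)\Rightarrow(3)\Rightarrow(1)$, but your case split for $(4)\Rightarrow(3)$ (an idempotent between $u$ and $v$ versus both in a common {\L}ukasiewicz block with $u\with v=a_i$) and your use of Lemma~\ref{contain idem} match the paper's Case~1/Case~2 for $(4)\Rightarrow(1)$ almost verbatim, and your $(3)\Rightarrow(1)$ is just the observation that the terms $z=x$ and $z=y$ already witness the inequality in Proposition~\ref{exp ob}.
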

\begin{proof}
$(1)\Ra (2):$ It follows from that the category $\RCat_{K_\Delta}$ is reflective and coreflective in $\RCat$ since $K_\Delta$ is a suitable subset of $[0,1]^2$.

$(2)\Ra (3):$ Let $u,v,r\in K$. Consider the two-point $[0,1]$-category $(A,\rho)=(\bbt,r,r)$, since the category $\RCat_{K_\Delta}$ is cartesian closed, the functor\[(A,\rho)\times - :\RCat_{K_\Delta}\lra\RCat_{K_\Delta}\] has a right adjoint hence it preserves final sinks \cite{Acc}. Let $(B,s)=(\{x,z\},s)$ where $s(x,z)=s(z,x)=u$, $(C,t)=(\{z,y\},t)$ where $t(z,y)=t(y,z)=v$, and $(D,d)=(\{x,z,y\},d)$ where $d(x,z)=d(z,x)=u$, $d(z,y)=d(y,z)=v$ and $d(x,y)=d(y,x)=u\with v$. Then \begin{center}
    \begin{tikzcd}[scale=4]
	{(B,s)} \\
	&& {(D,d)} \\
	{(C,t)}
	\arrow["{i_B}", hook, from=1-1, to=2-3]
	\arrow["{i_C}"', hook', from=3-1, to=2-3]
\end{tikzcd}
\end{center}
 is a clearly a final sink in $\RCat_{K_\Delta}$ where $i_B$ and $i_c$ are the corresponding embedding. It follows that 
\begin{center}
    \begin{tikzcd}[scale=4]
	(A\times B,\rho\times s) \\
	&& (A\times D,\rho\times d) \\
	(A\times C,\rho\times t)
	\arrow["{\id_A\times i_B}", hook, from=1-1, to=2-3]
	\arrow["{\id_A\times i_C}"', hook', from=3-1, to=2-3]
\end{tikzcd}
\end{center}
is also a final sink in $\RCat_{K_\Delta}$. Since $$\rho\times{d}((0,x),(1,y))=r\wedge (u\with v),$$ and 
 \[ d_{fin}((0,x),(1,y))=((u\wedge r)\with v)\vee ((v\wedge r)\with u)\] where $(A\times{D},d_{fin})$ is the final lift of the structured sink $$\{A\times{B}\lra A\times{D},A\times{C}\lra{A\times{D}}\}.$$ Hence the equality $$(u\with v )\wedge r=((u\wedge r)\with v)\vee ((v\wedge r)\with u)$$ holds for all $u,v,r\in K.$

 $(3)\Ra (4):$ Let $u=v=a$ and $r=a\with a$, it follows that $$a\with a=a\with a\with a,$$  hence $a\with a$ is idempotent for all $a\in K.$

 $(4)\Ra (1):$ We need to show that the functor $(X,r)\times-:\KCat\lra\KCat$ has a right adjoint for every object $(X,r)$ in $\KCat$, equivalently, for all $u,v\in K$ and all $x$ and $y$ in $X$, it holds that $$(u\with v)\wedge r(x,y)\leq\bigvee_{z\in X}(u\wedge r(x,z))\with (v\wedge r(z,y)).$$
We check the inequality in two cases. 

\emph{Case 1}: There is an idempotent element $p$ between $u$ and $v$. Without loss of generality, let $u\leq p\leq v$.  In this case, $a\with v=a\wedge v=a$ for all $a\leq u$ by Lemma \ref{contain idem}, one has that 
\[
 \bigvee_{z\in X}(u\wedge r(x,z))\with (v\wedge r(z,y))\geq (u\wedge r(x,y))\with (v\wedge r(y,y)) = u\wedge r(x,y) =(u\with v)\wedge r(x,y).  
 \] 
 
 \emph{Case 2}: There is no idempotent element between $u$ and $v$. In this case, since $\with$ is a continuous t-norm on $[0,1]$, by Theorem \ref{Ordinal sum},  there is an open interval $(p,q)\subseteq[0,1]$ with both $p$ and $q$ being idempotent such that $u,v\in(p,q)$ and no point in $(p, q)$ is idempotent. Since $u\with u$ and $v\with v$ are both idempotent,  clearly one has that $p=u\with u=v\with v=u\with v$ and $p\with u=p\with v=p$.    On one hand, if   $r(x,y)\geq p$, then it holds that 
 $$\bigvee_{z\in X}(u\wedge r(x,z))\with (v\wedge r(z,y))\geq (u\wedge r(x,y))\with (v\wedge r(y,y))\geq p\with v=p=
 (u\with v)\wedge r(x,y).$$ 
  On the other hand, if $r(x,y)\leq p$, then it holds that 
  \[\bigvee_{z\in X}(u\wedge r(x,z))\with (v\wedge r(z,y))\geq (u\wedge r(x,y))\with (v\wedge r(y,y)) =r(x,y)\with v=r(x,y) = (u\with v)\wedge r(x,y).\]
 Therefore, the inequality 
    \[\bv_{z\in X}(u\with r(x,z))\wedge(v\with r(z,y))\geq (u\with v)\wedge r(x,y)\] 
    always holds as desired. 
\end{proof}

Collecting all   elements  $a$ in $[0,1]$ such that $a\with a$  is idempotent, then one obtains a complete sublattice 
 $$M=\{a\in[0,1]|a\with a\text{ is idempotent}\}$$
of $[0,1]$. Clearly, $(M,\with, 1)$ is  the largest      complete subquantale of $([0,1],\with,1)$ satisfying the forth condition in the above theorem. 
  
 \begin{thm}\label{stable ccc conq}
Let $\with$ be a continuous t-norm.  A stable subconstruct $\bfA$ of $\RCat$ is cartesian closed if and only if there is a suitable subset $S$ of $[0,1]^2$ contained in $M^2$ such that $\bfA=\RCat_S$. Hence ${M}\text{-}\mathbf{Cat}$ is the largest cartesian closed and stable subconstruct in $\RCat$.
\end{thm}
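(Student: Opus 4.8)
The plan is to reduce everything to Proposition \ref{stable sub}, which lets me write an arbitrary stable subconstruct as $\RCat_S$ for a suitable $S\subseteq[0,1]^2$, and then to play off two tools: Theorem \ref{ccc}, which for a complete subquantale characterizes cartesian closedness by the condition that $a\with a$ be idempotent, and Lemma \ref{stable of ccc}, by which cartesian closedness passes to isomorphism-closed reflective and coreflective full subcategories. The recurring device is that whenever $S'\subseteq S$ are both suitable, $\RCat_{S'}$ is reflective and coreflective in $\RCat_S$: both are stable in $\RCat$, so the concrete reflection and coreflection arrows of $\RCat_{S'}$ in $\RCat$, being identity-carried and landing in $\RCat_{S'}\subseteq\RCat_S$, restrict to reflections and coreflections inside $\RCat_S$. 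Consequently cartesian closedness of $\RCat_S$ descends to $\RCat_{S'}$.

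For sufficiency, suppose $S\subseteq M^2$ is suitable. First I would recall that $(M,\with,1)$ is a complete subquantale every element $a$ of which satisfies that $a\with a$ is idempotent, so by the implication $(4)\Ra(1)$ of Theorem \ref{ccc} the category $\MCat=\RCat_{M^2}$ is cartesian closed. Since $S\subseteq M^2$ we have $\RCat_S\subseteq\RCat_{M^2}$, and by the device above $\RCat_S$ is a reflective and coreflective full subconstruct of $\MCat$; Lemma \ref{stable of ccc} then yields that $\RCat_S$ is cartesian closed.

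For necessity, suppose $\RCat_S$ is cartesian closed. The key idea is to isolate the diagonal part of $S$: set $J=\{p\in[0,1]\mid (p,p)\in S\}$. Using (S1)--(S3) one checks that $J$ is a complete subquantale and that $J_\Delta=\{(p,p)\mid p\in J\}$ is a suitable set with $J_\Delta\subseteq S$. Hence $\RCat_{J_\Delta}$ is reflective and coreflective in $\RCat_S$; as $\RCat_S$ is cartesian closed, Lemma \ref{stable of ccc} makes $\RCat_{J_\Delta}$ cartesian closed, and the implication $(2)\Ra(4)$ of Theorem \ref{ccc} applied to the subquantale $J$ forces $p\with p$ to be idempotent for every $p\in J$, that is $J\subseteq M$. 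It then remains to promote this control of the diagonal to all of $S$: given any $(a,b)\in S$, property (S2) gives $(b,a)\in S$, whence (S1) gives $(a\vee b,a\vee b)=(a,b)\vee(b,a)\in S$ and $(a\wedge b,a\wedge b)=(a,b)\wedge(b,a)\in S$, so $a\vee b,a\wedge b\in J\subseteq M$; since $\{a,b\}=\{a\vee b,a\wedge b\}$ this gives $a,b\in M$, i.e. $S\subseteq M^2$.

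The main obstacle is precisely this last bridge from the diagonal to the off-diagonal. One cannot simply transfer cartesian closedness \emph{up} from $\RCat_S$ to a larger $\KCat$, because the reflective/coreflective machinery of Lemma \ref{stable of ccc} only moves it downward; so the proof instead drives the obstruction down onto the symmetric diagonal subconstruct $\RCat_{J_\Delta}$, where Theorem \ref{ccc} applies verbatim, and then exploits the elementary lattice identity that each coordinate of a pair in a symmetric set is recovered as the join or the meet of that pair with its transpose. Once both directions are established, the final assertion is immediate: $M^2$ is itself suitable and contained in $M^2$, so $\MCat=\RCat_{M^2}$ is a cartesian closed stable subconstruct, while every cartesian closed stable $\RCat_S$ satisfies $S\subseteq M^2$ and hence $\RCat_S\subseteq\MCat$; thus $\MCat$ is the largest cartesian closed stable subconstruct of $\RCat$.
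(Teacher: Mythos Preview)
Your proof is correct and follows essentially the same route as the paper: both isolate the diagonal subquantale $K=\{p\mid (p,p)\in S\}$, observe that $\RCat_{K_\Delta}$ is (concretely) reflective and coreflective in $\RCat_S$, apply Lemma \ref{stable of ccc} and Theorem \ref{ccc} to force $K\subseteq M$, and then recover $S\subseteq K^2\subseteq M^2$ via the meet/join of $(a,b)$ with its transpose. The paper treats sufficiency as immediate and only writes out the necessity, whereas you make the sufficiency argument and the ``device'' (that $S'\subseteq S$ suitable implies $\RCat_{S'}$ is reflective and coreflective in $\RCat_S$) explicit; apart from this and the order in which you establish $J\subseteq M$ versus $S\subseteq J^2$, the arguments coincide.
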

\begin{proof} It suffices to show the necessity. Suppose the cartesian closed and stable subconstruct $\bfA$ be the form of $\RCat_S$ for some suitable  subset $S$ of $[0,1]^2$.   For each $(p,q)\in S$, without  loss of generality,  let $p\leq q$. Then one has that both $(p,p)=(p,q)\wedge(q,p)\in S$ and $(q,q)=(p,q)\vee(q,p)\in S$. Therefore, the subset 
$$K=\{p\in[0,1]\mid(p,p)\in S\}$$
is a complete sublattice of $[0,1]$ and $(K,\with,1)$ is a complete subquantale of $([0,1],\&,1)$. Furthermore, it holds that 
$$K_\Delta\subseteq S\subseteq K^2.$$

    
Since $\RCat_{K_\Delta}$ is a stable subconstruct of $\RCat$ and it is also contained in $\RCat_{S}$,   it is a stable subcategory of $\RCat_S$. Hence, it is cartesian closed by Lemma \ref{stable of ccc}. By Theorem \ref{ccc},  it follows that $a\with a$ is idempotent for all $a\in K$. That means,   $S\subseteq M^2$ as desired.
\end{proof}
  
\begin{rem}\label{M} We describe the complete subquantale $(M,\with,1)$ explicitly in this remark. 
\begin{enumerate}
\item[(1)]  Let $\with$ be the G\"{o}del t-norm. Then $M=[0,1]$ since all elements in $[0,1]$ are idempotent. In this case, $\RCat$ itself is cartesian closed.

\item[(2)] Let $\with$ be the product t-norm. Then  $M=\{0,1\}$. In this case, $\Ord$ is the largest cartesian closed and stable subconstruct of $\RCat$. Additionally, through the isomorphism between $x\mapsto -\ln x$ and $([0,1],\cdot,1)$ to $([0,\infty]^{\mathrm{op}},+,0)$, the complete subquantale $(\{0,1\},\cdot,1)$ is transformed to the complete subquantale $(\{0,\infty\},+,0)$.  Therefore, the category $\mathbf{GMet}$ includes the isomorphic copy of  $\Ord$ as the largest cartesian closed and stable subconstruct. 
\item[(3)] Let $\with$ be the {\L}ukasiewicz t-norm. Then $M=[0,0.5]\cup\{1\}$. Notice that $\Lu_3=\{0,0.5,1\}\subseteq M$, thus, $M$-$\Cat$ is a cartesian closed subconstruct of $\RCat$ which is larger than  $\sLu_3$-$\Cat$. 
\item[(4)] Let $\with$ be a continuous t-norm in general. Collect   all Archimedean blocks $\{[a_i,b_i]\}_{i\in I}$ of $\&$ such that each one has an isomorphism $\varphi_i:([0,1],\&_\Lu,1)\lra([a_i,b_i],\&,b_i)$, then 
$$M=\{x\in[0,1]\mid x\text{ is idempotent}\}\cup\Big(\bigcup_{i\in I}[\varphi_i(0),\varphi_i(0.5)]\Big).$$
If the index set $I$ is not empty, the quantale $(M,\with,1)$ is not a frame since each $x$ in $(\varphi_i(0),\varphi_i(0.5)]$ is not idempotent, but the category $M$-$\Cat$ is cartesian closed. 

For example, define $\with:[0,1]^2\lra{[0,1]}$ by
\[x\with y=\begin{cases}
		2x\cdot y, \quad x,y\in[0,0.5],\\
		\max\{x+y-1, 0.5\},\quad x,y\in[0.5,1],\\
        x\wedge y, \quad \text{otherwise}.
		\end{cases} \]
Clearly $\with$ is a continuous t-norm. In this case, $\{0,0.5,1\}$ is the set of all idempotent elements of $\with$ and $M=[0.5,0.75]\cup\{0,1\}$.
\end{enumerate}  
\end{rem}



\section{Cartesian closed subcategories of $\mathbf{YCCat}$}
In this section, we show that, for each complete subquantale $(K,\with,1)$ of $(M,\with,1)$,   all Yoneda complete $K$-categories and Yoneda continuous $[0,1]$-functors form a cartesian closed category.

\begin{defn}
    A net $\xlam$ in a $[0,1]$-category $(X,r)$ is \emph{forward Cauchy} \cite{Wagner1994,Wagner1997} if $$\bv_{\lam\in D}\bw_{\lam\leq\mu\leq\ga}r(x_{\mu},x_{\ga})=1.$$
A forward Cauchy net converges to $a\in X$ if $$r(a,x)=\bv_{\lam\in D}\bw_{\lam\leq\mu}r(x_{\mu},x)$$ for all $x\in X$, and $a$ is called a \emph{Yoneda limit} of $\xlam$,  denoted as $a=\mathrm{lim} x_\lam.$
\end{defn}

A $[0,1]$-category $(X,r)$ is \emph{Yoneda complete} if every forward Cauchy net in $(X,r)$ converges. For a $[0,1]$-functor $f:(X,r)\lra (Y,s)$ and a forward Cauchy net $\xlam$ in $(X,r)$, $\{f(x_\lam)\}_{\lam\in D}$ is clearly a forward Cauchy net in $(Y,s)$, the $[0,1]$-functor $f$ is \emph{Yoneda continuous} if it preserves the Yoneda limit of all forward Cauchy nets in $(X,r)$, that is, $f(\mathrm{lim}x_\lam)=\mathrm{lim} f(x_\lam)$ for all forward Cauchy nets $\xlam$ in $(X,r)$.
All   Yoneda complete $[0,1]$-categories and Yoneda continuous $[0,1]$-functors constitute a category 
\[\YCom.\]

\begin{exmp}
    \begin{enumerate}
       \item[(1)] Every finite $[0,1]$-categories is trivially Yoneda complete.
        \item[(2)] When $K=\bbt=\{0,1\}$, a Yoneda complete $\bbt$-category is precisely a directed complete preordered set, and Yoneda continuous $[0,1]$-functors between Yoneda complete $\bbt$-categories  are exactly  Scott continuous maps. 
        \item [(3)] For all $x,y\in [0,1]$, define 
        $$d(x,y)=\sup\{z\in[0,1]\mid x\with z\leq y\},$$ 
        then $([0,1],d)$ is a Yoneda complete $[0,1]$-category \cite [Proposition 2.30]{Wagner1997}. 
    \end{enumerate}
\end{exmp}

Let $(X,r)$ and $(Y,s)$ be $[0,1]$-categories. All Yoneda continuous $[0,1]$-functors from $(X,r)$ to $(Y,s)$ constitute a  set  $[(X,r)\ra(Y,s)]$, which is a subset of of $[(X,r),(Y,s)]$. Equipped with the real-enriched relation $d_\otimes$ on  $([(X,r),(Y,s)]$, one obtains a $[0,1]$-category $([(X,r)\ra(Y,s)],d_\otimes)$.
 
\begin{prop} (\cite [Theorem 3.8]{Lai2016}) Let $(X,r)$ and $(Y,s)$ be both Yoneda complete  $[0,1]$-categories. Then both  
 $(X,r)\otimes(Y,s)$ and $([(X,r)\ra(Y,s)],d_\otimes)$ are  Yoneda complete. Therefore, the category $(\YCom,\otimes,\vmathbb{1})$ is monoidal closed.
 \end{prop}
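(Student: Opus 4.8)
The plan is to prove the three assertions in order: (i) the tensor product $(X,r)\otimes(Y,s)$ is Yoneda complete; (ii) the internal hom $([(X,r)\ra(Y,s)],d_\otimes)$ is Yoneda complete; and (iii) deduce that $(\YCom,\otimes,\vmathbb{1})$ is monoidal closed from (i)--(ii) together with the monoidal closed structure already available on $\RCat$. Throughout, the recurring difficulty is a reverse inequality (a $\bigvee\bigwedge$ versus $\bigwedge$ or versus a $\with$-product), which holds only because the nets in play are \emph{forward Cauchy}.

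For (i), I would start from a forward Cauchy net $\{(x_\lambda,y_\lambda)\}_{\lambda\in D}$ in $(X,r)\otimes(Y,s)$. Since $r\otimes s((x_\mu,y_\mu),(x_\gamma,y_\gamma))=r(x_\mu,x_\gamma)\with s(y_\mu,y_\gamma)\le r(x_\mu,x_\gamma)$ and likewise $\le s(y_\mu,y_\gamma)$ (because $t\with 1=t$ and $\with$ is monotone), the projected nets $\{x_\lambda\}$ and $\{y_\lambda\}$ are forward Cauchy, so by Yoneda completeness I may set $a=\lim x_\lambda$ and $b=\lim y_\lambda$. The claim is that $(a,b)=\lim(x_\lambda,y_\lambda)$, i.e. that for every $(x,y)$ one has $r(a,x)\with s(b,y)=\bigvee_\lambda\bigwedge_{\mu\ge\lambda}\big(r(x_\mu,x)\with s(y_\mu,y)\big)$. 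Writing $A=r(a,x)=\bigvee_\lambda\bigwedge_{\mu\ge\lambda}r(x_\mu,x)$, $B=s(b,y)$ and $C$ for the right-hand side, the inequality $A\with B\le C$ is routine: for fixed $\lambda,\kappa$ choose $\rho\ge\lambda,\kappa$ and use that $\with$ preserves arbitrary joins in each variable, which is exactly the continuity hypothesis on the t-norm. The reverse inequality $C\le A\with B$ is the crux and is where forward Cauchyness is essential: for arbitrary nets the liminf of a $\with$-product can strictly exceed the $\with$-product of the two liminfs (an anti-phase oscillation exhibits this), so the identity can hold only because the net is forward Cauchy. Here I would use that $\bigvee_\lambda\bigwedge_{\mu\ge\lambda}r(x_\mu,a)=r(a,a)=1$ and $\bigvee_\lambda\bigwedge_{\mu\ge\lambda}s(y_\mu,b)=1$, so the tail error terms $e_\mu=r(x_\mu,a)\with s(y_\mu,b)$ satisfy $\bigvee_\lambda\bigwedge_{\mu\ge\lambda}e_\mu=1$; combining these uniform tail estimates with the triangle inequality of axiom (ii) and the continuity of $\with$ near the value $1$ traps $C$ below $A\with B$ up to an error driven to $1$. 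This uniform-tail estimate is the technical heart and the step I expect to be the main obstacle.

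For (ii), take a forward Cauchy net $\{f_\lambda\}$ in $([(X,r)\ra(Y,s)],d_\otimes)$. Since $d_\otimes(f_\mu,f_\gamma)=\bigwedge_{x\in X}s(f_\mu(x),f_\gamma(x))\le s(f_\mu(x),f_\gamma(x))$, each evaluated net $\{f_\lambda(x)\}$ is forward Cauchy in $(Y,s)$, so I may define $f(x)=\lim_\lambda f_\lambda(x)$ pointwise. I would then verify in turn: that $f$ is a $[0,1]$-functor, by passing the inequalities $r(x,x')\le s(f_\mu(x),f_\mu(x'))$ to the Yoneda limit using lower semicontinuity of $s$ along Yoneda limits; that $f$ is Yoneda continuous, by interchanging the Yoneda limit defining $f$ with a Yoneda limit taken in $X$ and invoking Yoneda continuity of each $f_\lambda$; and finally that $f=\lim_\lambda f_\lambda$ in $d_\otimes$. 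This last point requires the identity $\bigvee_\lambda\bigwedge_{x\in X}\bigwedge_{\mu\ge\lambda}s(f_\mu(x),g(x))=\bigwedge_{x\in X}\bigvee_\lambda\bigwedge_{\mu\ge\lambda}s(f_\mu(x),g(x))$, i.e. an interchange of $\bigvee_\lambda$ with $\bigwedge_x$; the inequality $\le$ is automatic, whereas $\ge$ fails for general increasing families and is recovered precisely from the uniform-in-$x$ forward Cauchy estimate $\bigvee_\lambda\bigwedge_{\lambda\le\mu\le\gamma}\bigwedge_{x\in X}s(f_\mu(x),f_\gamma(x))=1$. This is the exact analogue of the obstacle met in (i).

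Finally, for (iii), I would note that $\vmathbb{1}$ is finite, hence Yoneda complete, and that by (i) and (ii) both $(X,r)\otimes-$ and $(Y,s)\mapsto([(X,r)\ra(Y,s)],d_\otimes)$ restrict to $\YCom$. It then remains to check that the currying bijection $\RCat(A\otimes B,C)\cong\RCat(A,[B,C])$ underlying the monoidal closed structure of $\RCat$ restricts to Yoneda continuous morphisms on both sides, i.e. that currying and uncurrying preserve Yoneda continuity and that the adjoint transpose of a Yoneda continuous map again takes Yoneda continuous values. Granting this routine verification, $(\YCom,\otimes,\vmathbb{1})$ inherits a monoidal closed structure with internal hom $([(X,r)\ra(Y,s)],d_\otimes)$.
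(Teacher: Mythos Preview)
The paper does not prove this proposition: it is quoted as \cite[Theorem 3.8]{Lai2016} and no argument is supplied in the present paper, so there is no proof here to compare your attempt against. Your three-step outline is the standard route to such a result, and the difficulties you single out (the $\bigvee\bigwedge$ interchanges that genuinely require forward Cauchyness) are exactly the right ones.

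One technical point in part (i) deserves care. For the hard inequality $C\le A\with B$ you propose to use $\liminf_\mu r(x_\mu,a)=1$ together with the triangle inequality. But the triangle inequality reads $r(x_\mu,a)\with r(a,x)\le r(x_\mu,x)$, which yields a \emph{lower} bound $e_\mu\with A\with B\le r(x_\mu,x)\with s(y_\mu,y)$ --- the wrong direction for trapping $C$ below $A\with B$. To bound $r(x_\mu,x)$ from above via the limit you would need $r(a,x_\mu)$ close to $1$, and nothing forces $\liminf_\mu r(a,x_\mu)=1$ in the non-symmetric setting. The fix is to use the forward Cauchy hypothesis directly on the net rather than via the limit point: from $r(x_\mu,x_\gamma)\ge\epsilon$ for $\gamma\ge\mu\ge\lambda_0$ one obtains $r(x_\mu,x)\ge\epsilon\with r(x_\gamma,x)$, so once some $r(x_{\mu_1},x)$ falls near $A$ every later $r(x_\gamma,x)$ is trapped near $A$ via the residual of $\with$; one then chooses a common index $\gamma\ge\mu_1$ at which the second coordinate is also near $B$, and bounds $\gamma_\lambda\le r(x_\gamma,x)\with s(y_\gamma,y)$, which is close to $A\with B$ by continuity of $\with$. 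With this adjustment your sketch goes through; the analogous interchange in part (ii) is handled by the same device, using that the forward Cauchy estimate for $d_\otimes$ is uniform in $x$.
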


 \begin{prop}
    Let $\bfA$ be stable subconstruct of $\RCat$. Then the category $(\bfA\cap\YCom,\otimes,\vmathbb{1})$ is monoidal closed.
\end{prop}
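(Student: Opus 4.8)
The plan is to realize $\bfA\cap\YCom$ as a monoidal closed full subcategory of $\YCom$, exploiting the fact---established in the preceding proposition---that $(\YCom,\otimes,\vmathbb{1})$ is already monoidal closed, with internal hom $([(X,r)\ra(Y,s)],d_\otimes)$. Since $\bfA$ is stable, Proposition \ref{stable sub} gives a suitable subset $S\subseteq[0,1]^2$ with $\bfA=\RCat_S$, and $\bfA\cap\YCom$ is then the full subcategory of $\YCom$ whose objects are the Yoneda complete members of $\RCat_S$. It therefore suffices to verify that $\bfA\cap\YCom$ contains the unit $\vmathbb{1}$ and is closed under the tensor product $\otimes$ and the internal hom $[(X,r)\ra(Y,s)]$ of $\YCom$; the adjunction $(X,r)\otimes(-)\dashv[(X,r)\ra(-)]$ then restricts to $\bfA\cap\YCom$.

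First I would dispose of the easy ingredients. The unit $\vmathbb{1}$ is finite, hence Yoneda complete, and lies in every stable subconstruct, so $\vmathbb{1}\in\bfA\cap\YCom$. For the tensor product, if $(X,r),(Y,s)\in\bfA\cap\YCom$, then $(X,r)\otimes(Y,s)$ lies in $\bfA$ by Proposition \ref{stab are monclo} and is Yoneda complete by the preceding proposition, so $(X,r)\otimes(Y,s)\in\bfA\cap\YCom$.

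The one delicate point is closure under the internal hom, and the subtlety is that the internal hom of $\YCom$ is carried by the set $[(X,r)\ra(Y,s)]$ of Yoneda \emph{continuous} functors---a subset of the full function space $[(X,r),(Y,s)]$ underlying the internal hom of $\bfA$ itself. By the preceding proposition, $([(X,r)\ra(Y,s)],d_\otimes)$ is Yoneda complete, so it remains to place it in $\bfA$. Here I would invoke the pointwise character of membership in $\RCat_S$: the computation in the proof of Proposition \ref{stab are monclo} shows that the full function space $([(X,r),(Y,s)],d_\otimes)$ lies in $\RCat_S$, that is, $(d_\otimes(f,g),d_\otimes(g,f))\in S$ for \emph{all} functors $f,g$. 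Passing to the subset of Yoneda continuous functors merely retains some of these pairs, so $([(X,r)\ra(Y,s)],d_\otimes)$ is again in $\RCat_S=\bfA$. Thus the $\YCom$ internal hom lands in $\bfA\cap\YCom$.

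Once these closure properties are in hand, $\bfA\cap\YCom$ is a full subcategory of the monoidal closed $\YCom$ that contains $\vmathbb{1}$ and is stable under $\otimes$ and $[-\ra-]$; hence the evaluation arrows of $\YCom$ and their co-universal property are inherited verbatim, and $(\bfA\cap\YCom,\otimes,\vmathbb{1})$ is monoidal closed. The main obstacle is precisely the internal-hom closure, and it dissolves once one observes that the defining condition of $\bfA$ is a pointwise condition on pairs of structure values, hence inherited by sub-$[0,1]$-categories.
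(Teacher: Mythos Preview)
Your proposal is correct and follows essentially the same approach as the paper: show that for Yoneda complete objects $(X,r),(Y,s)$ of $\bfA$, both $(X,r)\otimes(Y,s)$ and $([(X,r)\ra(Y,s)],d_\otimes)$ lie in $\bfA$, so that the monoidal closed structure of $\YCom$ restricts. The paper's proof is a one-sentence assertion of exactly these two closure facts; you supply the justification the paper leaves implicit, namely that $([(X,r)\ra(Y,s)],d_\otimes)$ is a full sub-$[0,1]$-category of $([(X,r),(Y,s)],d_\otimes)\in\bfA$ and that membership in $\RCat_S$ is inherited by such subobjects.
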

\begin{proof}
    For  given Yoneda complete $[0,1]$-categories $(X,r)$ and $(Y,s)$ in $\bfA$, the tensor product $(X,r)\otimes (Y,s)$ and the function space $([(X,r)\ra(Y,s)],d_{\otimes})$ in $\YCom$ are both in $\mathbf{A}$.
\end{proof}

\begin{prop}(\cite[Proposition 4.3]{Lai2016})\label{product}
Let $(X,r)$ and $(Y,s)$ be both Yoneda complete $[0,1]$-categories. Then the cartesian product $(X\times Y,r\times s)$ is also Yoneda complete. 
\end{prop}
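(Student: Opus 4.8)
The plan is to reduce Yoneda completeness of the product to that of the two factors by projecting, and then to reassemble the coordinatewise Yoneda limits into a limit for the product net. First I would take an arbitrary forward Cauchy net $\{(x_\lam,y_\lam)\}_{\lam\in D}$ in $(X\times Y,r\times s)$. Because the tail meets factor as
$$\bw_{\lam\leq\mu\leq\ga}(r\times s)((x_\mu,y_\mu),(x_\ga,y_\ga))=\Big(\bw_{\lam\leq\mu\leq\ga}r(x_\mu,x_\ga)\Big)\wedge\Big(\bw_{\lam\leq\mu\leq\ga}s(y_\mu,y_\ga)\Big),$$
and each of the two factors on the right dominates their meet, the hypothesis $\bv_{\lam}\bw_{\lam\leq\mu\leq\ga}(r\times s)((x_\mu,y_\mu),(x_\ga,y_\ga))=1$ forces both $\bv_{\lam}\bw_{\lam\leq\mu\leq\ga}r(x_\mu,x_\ga)=1$ and $\bv_{\lam}\bw_{\lam\leq\mu\leq\ga}s(y_\mu,y_\ga)=1$. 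Thus the coordinate nets $\{x_\lam\}_{\lam\in D}$ and $\{y_\lam\}_{\lam\in D}$ are forward Cauchy in $(X,r)$ and $(Y,s)$, and by Yoneda completeness of the factors they possess Yoneda limits $a=\lim x_\lam$ and $b=\lim y_\lam$.

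Next I would verify that $(a,b)$ is a Yoneda limit of $\{(x_\lam,y_\lam)\}_{\lam\in D}$, i.e. that
$$(r\times s)((a,b),(x,y))=\bv_{\lam\in D}\bw_{\lam\leq\mu}(r\times s)((x_\mu,y_\mu),(x,y))$$
for every $(x,y)\in X\times Y$. Abbreviating $A_\lam=\bw_{\lam\leq\mu}r(x_\mu,x)$ and $B_\lam=\bw_{\lam\leq\mu}s(y_\mu,y)$, convergence in the factors gives $r(a,x)=\bv_{\lam}A_\lam$ and $s(b,y)=\bv_{\lam}B_\lam$; the left-hand side above is then $(\bv_{\lam}A_\lam)\wedge(\bv_{\lam}B_\lam)$, while the tail meet on the right splits as $A_\lam\wedge B_\lam$, so the right-hand side is $\bv_{\lam}(A_\lam\wedge B_\lam)$. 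Everything therefore reduces to the interchange identity $(\bv_{\lam}A_\lam)\wedge(\bv_{\lam}B_\lam)=\bv_{\lam}(A_\lam\wedge B_\lam)$.

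The hard part is precisely this last step, which fails for arbitrary families and must exploit the structure of the data. The key observation is that the nets $A_\lam$ and $B_\lam$ are monotone increasing along $D$: as $\lam$ grows the index set $\{\mu\mid\lam\leq\mu\}$ shrinks, so each tail meet can only increase. Using that $[0,1]$ is a frame I would first distribute to obtain $(\bv_{\lam}A_\lam)\wedge(\bv_{\nu}B_\nu)=\bv_{\lam,\nu}(A_\lam\wedge B_\nu)$; then, given any $\lam,\nu\in D$, directedness of $D$ supplies $\kappa\geq\lam,\nu$, and monotonicity yields $A_\lam\wedge B_\nu\leq A_\kappa\wedge B_\kappa$. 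Hence $\bv_{\lam,\nu}(A_\lam\wedge B_\nu)\leq\bv_{\kappa}(A_\kappa\wedge B_\kappa)$, and since the reverse inequality is immediate the identity holds. This establishes $(a,b)=\lim(x_\lam,y_\lam)$, so every forward Cauchy net in $(X\times Y,r\times s)$ converges and the cartesian product is Yoneda complete.
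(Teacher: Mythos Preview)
Your argument is correct. The paper itself does not prove this proposition but cites it from \cite{Lai2016}; the coordinatewise reduction you carry out---projecting to obtain forward Cauchy nets in the factors, taking their Yoneda limits, and then recombining via the interchange identity $(\bv_\lam A_\lam)\wedge(\bv_\lam B_\lam)=\bv_\lam(A_\lam\wedge B_\lam)$ for monotone directed families in the frame $[0,1]$---is the expected and standard route.
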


But $\YCom$ need not be cartesian closed, since it is shown in \cite[Theorem 4.7]{Lai2016} that the category $\YCom$ is cartesian closed if and only if the continuous t-norm $\with=\wedge$.

For the complete subquantale $(M,\with,1)$ of $([0,1],\with,1)$, collect all the Yoneda complete $M$-categories and Yoneda continuous $[0,1]$-functors, one obtains a category \[\YComM.\]
We show that $\YComM$ is cartesian closed in the sequel. 

By Proposition \ref{powobj} and Theorem \ref{ccc}, the power object $(Y,s)^{(X,r)}$ of Yoneda complete $M$-categories  $(X,r)$ and $(Y,s)$  always exists in   $\MCat$. We check that its subobject $([(X,r)\ra(Y,s)],\dt)$ consisting of all Yoneda continuous $[0,1]$-functors from $(X,r)$ to $(Y,s)$   is also Yoneda complete.

\begin{prop}
Suppose $(X,r)$ and $(Y,s)$ are both Yoneda complete $M$-categories and $\flam$ is a forward Cauchy net in $([(X,r)\ra (Y,s)],\dt)$. Then $\flam$ is also forward Cauchy in $([(X,r)\ra (Y,s)],d_\otimes)$.
\end{prop}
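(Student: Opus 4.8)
The plan is to reduce the entire statement to the pointwise comparison $\dt(f,g)\leq d_\otimes(f,g)$ established in Proposition \ref{powobj}(2). Recall that by definition a net $\flam$ is forward Cauchy with respect to a structure $d$ precisely when $\bv_{\lam\in D}\bw_{\lam\leq\mu\leq\ga}d(f_\mu,f_\ga)=1$. Thus the hypothesis says that the supremum-of-infima expression built from $\dt$ equals $1$, and I must show that the same expression built from $d_\otimes$ equals $1$.

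First I would observe that the forward Cauchy supremum is monotone in the underlying structure. Since $\dt(f_\mu,f_\ga)\leq d_\otimes(f_\mu,f_\ga)$ holds for every pair of $[0,1]$-functors (in particular for every pair in the net, whose members are Yoneda continuous and hence ordinary $[0,1]$-functors in $[(X,r),(Y,s)]$), taking the infimum over all $\mu,\ga$ with $\lam\leq\mu\leq\ga$ preserves the inequality for each fixed $\lam$, and then taking the join over $\lam\in D$ preserves it as well. Concretely,
\[
1=\bv_{\lam\in D}\bw_{\lam\leq\mu\leq\ga}\dt(f_\mu,f_\ga)\leq\bv_{\lam\in D}\bw_{\lam\leq\mu\leq\ga}d_\otimes(f_\mu,f_\ga)\leq 1,
\]
which forces the middle term to equal $1$. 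This is exactly the assertion that $\flam$ is forward Cauchy in $([(X,r)\ra(Y,s)],d_\otimes)$.

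The step I expect to carry the real weight is not this monotonicity argument, which is immediate, but rather having Proposition \ref{powobj}(2) available at all: that proposition presupposes the power object $(Y,s)^{(X,r)}$ in $\RCat$, which is guaranteed here because $(X,r)$ and $(Y,s)$ are $M$-categories and $\MCat$ is cartesian closed by Theorem \ref{ccc}. The hypotheses that the categories are Yoneda complete and $M$-valued therefore enter only to make $\dt$ a legitimate power-object structure; the comparison itself is purely structural. I anticipate that this lemma is a stepping stone, and that the genuinely substantial work lies in the converse direction, or in verifying that the $\dt$-Yoneda limit of $\flam$ coincides with its $d_\otimes$-Yoneda limit, which is what one would ultimately need to conclude that $([(X,r)\ra(Y,s)],\dt)$ is Yoneda complete and hence that $\YComM$ is cartesian closed.
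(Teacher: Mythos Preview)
Your proposal is correct and matches the paper's own proof, which consists of the single sentence ``Notice that $\dt(f,g)\leq d_\otimes(f,g)$ for all $f,g\in[(X,r)\ra(Y,s)]$.'' You have simply written out in full the monotonicity step that the paper leaves implicit; your contextual remarks about why $\dt$ is available and where the result is headed are accurate but not part of the proof itself.
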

\begin{proof}
Notice that $\dt(f,g)\leq d_\otimes(f,g)$ for all $f,g\in[(X,r)\ra(Y,s)]$.
\end{proof}

Therefore, if $\flam$ is a forward Cauchy net in $([X\ra Y],\dt)$, then for each $x\in X$, the net $\flamx$ is forward Cauchy in $(Y,s)$. So, one can define a function  
$$f:(X,r)\lra{(Y,s)}, \quad f(x)=\lim f_{\lam}(x)$$ in a pointwise way.  The function $f$  is a Yoneda continuous $[0,1]$-functor \cite [Theorem 4.2]{Lai2006}. Our aim is to show that $f$ is already a Yoneda limit of the net $\flam$ in $([X\ra Y],\dt)$, hence $([(X,r)\ra (Y,s)],\dt)$ is indeed Yoneda complete. 

\begin{defn}
 Let $(X,r)$ be a $[0,1]$-category and $\al$ be a real number in $[0,1]$. A net $\xlam$ is called to 
 be \emph{eventually $\al$-monotone} if there is some $\lam_0\in D$ such that $\al\leq r(x_\lam,x_\mu)$ for all $\mu\geq\lam\geq\lam_0$.
\end{defn}
\begin{lem}\label{al-mon}
    Let $(X,r)$ be a $[0,1]$-category and $\xlam$ be an eventually $\al$-monotone net in $(X,r)$ for some idempotent element $\alpha$ of $\with$.     Then there is some $\lam_0\in D$ such that for all $\mu\geq\lam\geq\lam_0$, 
    \[\al\wedge r(x_\mu,x)\leq \al\wedge r(x_\lam,x)\] for all $x\in X$.
\end{lem}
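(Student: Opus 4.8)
The plan is to read the threshold index $\lam_0$ directly off the hypothesis of eventual $\al$-monotonicity and then exploit the composition law of the $[0,1]$-category together with the idempotency of $\al$. No completeness or limit arguments are needed; the statement is purely about the triangle inequality.

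First I would take $\lam_0\in D$ to be exactly the index supplied by the definition of eventual $\al$-monotonicity, so that $\al\leq r(x_\lam,x_\mu)$ holds for all $\mu\geq\lam\geq\lam_0$. Fixing such $\mu\geq\lam\geq\lam_0$ and an arbitrary $x\in X$, axiom (ii) in the definition of a $[0,1]$-category gives $r(x_\mu,x)\with r(x_\lam,x_\mu)\leq r(x_\lam,x)$; combining this with $\al\leq r(x_\lam,x_\mu)$ and the monotonicity of $\with$ yields the key inequality $\al\with r(x_\mu,x)\leq r(x_\lam,x)$.

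The heart of the argument is to convert this tensor inequality into a meet inequality, and this is where idempotency of $\al$ enters. Since $\al\wedge r(x_\mu,x)\leq\al$ and $\al$ is idempotent, Lemma \ref{contain idem} (applied with $p=\al$) collapses the tensor to a meet, giving $\al\with(\al\wedge r(x_\mu,x))=\al\wedge r(x_\mu,x)$. On the other hand, monotonicity of $\with$ together with the key inequality gives $\al\with(\al\wedge r(x_\mu,x))\leq\al\with r(x_\mu,x)\leq r(x_\lam,x)$. Chaining these two facts produces $\al\wedge r(x_\mu,x)\leq r(x_\lam,x)$, and since trivially $\al\wedge r(x_\mu,x)\leq\al$, I conclude $\al\wedge r(x_\mu,x)\leq\al\wedge r(x_\lam,x)$, as required.

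I expect the only subtle point to be the correct invocation of Lemma \ref{contain idem}: one must feed the meet $\al\wedge r(x_\mu,x)$, rather than $r(x_\mu,x)$ itself, into the tensor, so that the hypothesis $x\leq p\leq y$ of that lemma is met (here with both $x=\al\wedge r(x_\mu,x)\leq\al=p$ and $y=\al$) and the tensor genuinely reduces to a meet. With this device the estimate is a single chain of monotonicity inequalities, so no case distinction on whether $r(x_\mu,x)$ lies above or below $\al$ is necessary.
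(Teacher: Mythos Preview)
Your proof is correct and follows essentially the same route as the paper: choose $\lam_0$ from the definition of eventual $\al$-monotonicity, use the triangle inequality to get $\al\with r(x_\mu,x)\leq r(x_\lam,x)$, and invoke idempotency of $\al$ via Lemma~\ref{contain idem} to turn the tensor into a meet. The paper's chain is the same computation written in one line, using directly that $\al\with r(x_\mu,x)=\al\wedge r(x_\mu,x)$; your detour through $\al\with(\al\wedge r(x_\mu,x))$ is harmless but unnecessary, since $[0,1]$ is totally ordered and Lemma~\ref{contain idem} already yields $\al\with y=\al\wedge y$ for every $y$ without any case split.
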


\begin{proof}
 Since $\xlam$ is eventually $\al$-monotone, there is some $\lam_0$ such that $\al\leq r(x_\lam, x_\mu)$ for all $\mu\geq\lam\geq\lam_0$. Thus, one has that 
  \[\al\wedge r(x_\mu,x)=\al\wedge\al\wedge r(x_\mu,x)=\al\wedge(\al\with r(x_\mu,x))
  \leq\al\wedge\big( r(x_{\lam},x_\mu)\with r(x_\mu,x)\big)\leq\al\wedge r(x_{\lam},x)\]
  for all   $\mu\geq\lam\geq\lam_0$ and all  $x\in X$.
\end{proof}

Recall from \cite{Gierz2003} that $x$ is \emph{way below } $y$ (written as $x\ll y$) in a directed complete poset $(L,\leq)$ if for all directed subset $D\subseteq{L}$,   $y\leq\bv D$ always implies the existence of a $d\in D$ with $x\leq d$. An element $x$ is \emph{compact} if $x\ll x$.
    
\begin{prop}\label{approx}
In the complete subquantale $(M,\with, 1)$ of  $([0,1],\with,1)$, denote the set of all idempotent elements of $\with$ by $\Idm$, it holds that 
 $$1=\bv\{\al\in \Idm\mid\al\ll 1 \text{ in } M\}.$$ 
\end{prop}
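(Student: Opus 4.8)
The plan is to exploit the fact that $M$, being a complete sublattice of $[0,1]$, is \emph{totally ordered}. In a chain the way-below relation below the top element is almost free, so the whole statement will reduce to a structural question about where $1$ sits relative to the idempotents, which is answered by the ordinal-sum description of $M$ in Remark \ref{M}.

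First I would record two easy facts: since $M$ is closed under arbitrary joins, joins computed in $M$ agree with those in $[0,1]$, and $M$ is a chain. Then I would establish the key observation that \emph{every} $\al\in M$ with $\al<1$ satisfies $\al\ll 1$ in $M$. Indeed, if $D\subseteq M$ is directed with $\bigvee D=1$ and no member of $D$ dominates $\al$, then by totality every $d\in D$ has $d<\al$, whence $\bigvee D\le\al<1$, a contradiction. In particular the set $\{\al\in\Idm\mid\al\ll 1\}$ contains every idempotent strictly below $1$, and the only remaining question is whether the join of these already equals $1$, or whether $1$ itself must be included.

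Next I would set $t=\bigvee(\Idm\cap[0,1))$ and split into two cases. If $t=1$, then idempotents approach $1$ from below, so $1=\bigvee(\Idm\cap[0,1))\le\bigvee\{\al\in\Idm\mid\al\ll 1\}$ and we are done. If $t<1$, then since $\Idm$ is closed, $t$ is itself idempotent and $(t,1)$ contains no idempotent; by Theorem \ref{Ordinal sum} together with the explicit form of $M$ in Remark \ref{M}, the interval $(t,1)$ is then a single Archimedean block, and $M$ has a genuine gap just below $1$ (namely $(t,1)$ in the product case and $(\varphi(0.5),1)$ in the {\L}ukasiewicz case). I would use this gap to show $1\ll 1$: any directed $D\subseteq M$ with $\bigvee D=1$ must contain $1$ itself, since otherwise all its members lie at or below the lower end of the gap and cannot join to $1$. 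As $1\in\Idm$, this places $1$ in the set and the join is again $1$. Together with the trivial inequality $\bigvee\{\al\in\Idm\mid\al\ll 1\}\le 1$, the two cases give the claimed equality.

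The step I expect to carry the real content is the second case: verifying that $1$ is compact in $M$ precisely when it is not approached from below by idempotents. This is where the ordinal-sum classification is needed, to guarantee that the idempotent-free interval $(t,1)$ produces an actual gap in $M$. The subtlety is that in a {\L}ukasiewicz block part of the block, namely $[\varphi(0),\varphi(0.5)]$, still belongs to $M$, so one must locate the gap above $\varphi(0.5)$ rather than immediately above $t$; everything else is a short argument using only that $M$ is a chain.
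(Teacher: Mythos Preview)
Your proposal is correct and follows essentially the same route as the paper: the same two-case split (whether or not the idempotents accumulate at $1$, equivalently whether or not some Archimedean block has top endpoint $1$), with $1\ll 1$ in $M$ coming from the gap below $1$ in the non-accumulating case. Your argument is in fact slightly more complete than the paper's, since you supply an explicit proof that every $\al\in M$ with $\al<1$ satisfies $\al\ll 1$ in $M$, which the paper only asserts for idempotents without justification.
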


\begin{proof} By Theorem \ref{Ordinal sum} and Remark \ref{M}, we divide the proof  into two cases. 

\emph{Case 1}: There is some Archimedean block $[a,b]$ of the t-norm $\with$ satisfying that $b=1$. One can see that the top element $1\in M$ is an isolated point in $M$. Thus, one has that $1\ll 1$ in $M$ and   $1=\bv\{\al\in \Idm\mid\al\ll 1 \text{ in } M\}$.

\emph{Case 2}: Every Archimedean block $[a,b]$ of the t-norm $\with$ satisfies that $b<1$. One has that the top element $1\in M$ is not isolated in $\Idm$, hence, $1=\bv\{\al\in\Idm\mid\al<1\}$. Furthermore, for all idempotent $\al$ with $\al<1$, it holds that $\al\ll 1$ in $M$.  Thus, one obtains that $1=\bv\{\al\in \Idm\mid\al\ll 1 \text{ in } M\}$ as desired. 
    
\end{proof}

\begin{lem} \label{Cau is al-mon} Each forward Cauchy net $\xlam$ in an $M$-category $(X,r)$ is eventually $\al$-monotone for all   elements $\al\ll 1$ in the complete sublattice $M$.
\end{lem}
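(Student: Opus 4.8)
The plan is to read the forward Cauchy condition as the statement that a certain monotone (hence directed) family of ``tail infima'' in $M$ has join equal to $1$, and then to feed that directed family into the way-below relation $\alpha\ll 1$.

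First I would set, for each $\lambda\in D$,
$$g(\lambda)=\bw_{\lam\leq\mu\leq\ga}r(x_\mu,x_\ga).$$
Since $(X,r)$ is an $M$-category, every value $r(x_\mu,x_\ga)$ lies in $M$, and because $M$ is a complete sublattice of $[0,1]$ the meet $g(\lambda)$ again lies in $M$ and coincides with the meet computed in $[0,1]$. Next I would observe that $\lambda\mapsto g(\lambda)$ is monotone: enlarging $\lambda$ shrinks the index set $\{(\mu,\ga)\mid\lam\leq\mu\leq\ga\}$ over which the infimum is taken, so $g(\lambda)\leq g(\lambda')$ whenever $\lambda\leq\lambda'$. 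As $D$ is directed and $g$ is monotone, the image $\{g(\lambda)\mid\lambda\in D\}$ is a directed subset of $M$, and by the forward Cauchy hypothesis its join, taken in $M$, equals
$$\bv_{\lam\in D}g(\lambda)=1.$$

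Finally, since $\alpha\ll 1$ in the directed complete lattice $M$ and $1=\bv_{\lam\in D}g(\lambda)$ with the family $\{g(\lambda)\}_{\lambda\in D}$ directed, the definition of the way-below relation supplies some $\lambda_0\in D$ with $\alpha\leq g(\lambda_0)$. Unwinding the infimum, this says exactly that $\alpha\leq r(x_\mu,x_\ga)$ for all $\ga\geq\mu\geq\lambda_0$, which (after renaming $\mu,\ga$) is precisely the desired eventual $\alpha$-monotonicity.

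I do not anticipate a serious obstacle. The only point demanding care is that all the suprema and infima be computed inside $M$ rather than in $[0,1]$; this is guaranteed because $M$ is a complete sublattice, so the inclusion $M\hookrightarrow[0,1]$ preserves arbitrary joins and meets, and in particular $1\in M$ is still the join of the $g(\lambda)$ within $M$. The conceptual heart of the argument is simply recognizing that the forward Cauchy condition \emph{is} the assertion that a monotone, hence directed, family of tail infima joins up to the top of $M$; once this reformulation is in place, the way-below relation $\alpha\ll 1$ discharges the claim in a single step.
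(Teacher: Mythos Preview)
Your proof is correct and follows essentially the same approach as the paper: define the tail infima $g(\lambda)=\bigwedge_{\lambda\leq\mu\leq\gamma}r(x_\mu,x_\gamma)$, observe that this is a directed family in $M$ with join $1$, and apply $\alpha\ll 1$ to extract the required $\lambda_0$. Your write-up is in fact slightly more careful than the paper's in justifying why the infima lie in $M$ and why the family is directed.
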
 
\begin{proof} Notice that 
$$\{\al_\lam=\bw_{\lam\leq\mu\leq\ga}r(x_\mu,x_\ga)\mid \lam\in D\}$$
is a directed subset in $M$ and $\bv_{\lam\in D}\al_\lam=1$. If $\al\ll 1$, then there is some $\lam_0\in D$ such that
$$\al\leq\al_{\lam_0}=\bw_{\lam_0\leq\mu\leq\ga}r(x_\mu,x_\ga).$$
Thus, for all $\ga\geq\mu\geq\lam_0$, $\al\leq r(x_\mu,x_\ga) $, which means that $\xlam$ is eventually $\al$-monotone. 
\end{proof}


\begin{thm}\label{exp in Kcat}
Suppose $(X,r)$ and $(Y,s)$ are  Yoneda complete $M$-categories, then the $M$-category $([(X,r)\ra(Y,s)],\dt)$ is also Yoneda complete.
\end{thm}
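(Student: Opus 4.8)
The plan is to prove that the pointwise limit $f$, which is already known to be a Yoneda continuous $[0,1]$-functor, is a Yoneda limit of $\flam$ in $([(X,r)\ra(Y,s)],\dt)$; by uniqueness of Yoneda limits this forces every forward Cauchy net to converge, so the function space is Yoneda complete. Thus I would fix an arbitrary $g\in[(X,r)\ra(Y,s)]$ and verify the defining equation
\[\dt(f,g)=\bv_{\lam}\bw_{\mu\geq\lam}\dt(f_\mu,g).\]
Note first that the power object lives in $\MCat$ by Proposition \ref{powobj} and Theorem \ref{ccc}, so $([(X,r)\ra(Y,s)],\dt)$ is an $M$-category and the auxiliary lemmas on $\al$-monotonicity apply to $\flam$.

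The inequality ``$\geq$'' is routine. For fixed $x,y\in X$ the pointwise convergence $f_\mu(x)\to f(x)$ gives $s(f(x),g(y))=\bv_\lam\bw_{\mu\geq\lam}s(f_\mu(x),g(y))$. Since $\dt(f_\mu,g)\leq r(x,y)\ra s(f_\mu(x),g(y))$ and $r(x,y)\ra(-)$ preserves meets by Proposition \ref{resd prop}, meeting over $\mu\geq\lam$, joining over $\lam$, and using monotonicity of $\ra$ in its second argument yields $\bv_\lam\bw_{\mu\geq\lam}\dt(f_\mu,g)\leq r(x,y)\ra s(f(x),g(y))$; a meet over $x,y$ finishes this direction.

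The content is in ``$\leq$''. By Proposition \ref{approx} and the frame distributivity of $\wedge$ over arbitrary joins in $[0,1]$, one has $\dt(f,g)=\bv\{\al\wedge\dt(f,g)\mid\al\in\Idm,\ \al\ll1\}$, so it suffices to bound $\al\wedge\dt(f,g)$ for each idempotent $\al\ll1$. Lemma \ref{Cau is al-mon} produces an index $\lam_0$ with $\al\leq\dt(f_\lam,f_\mu)$ for all $\mu\geq\lam\geq\lam_0$; since $\dt\leq d_\otimes\leq s(f_\lam(x),f_\mu(x))$ for every $x$ by Proposition \ref{powobj}(2), the same $\lam_0$ witnesses that each coordinate net $\flamx$ is eventually $\al$-monotone. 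Lemma \ref{al-mon} then gives $\al\wedge s(f_\mu(x),z)\leq\al\wedge s(f_\lam(x),z)$ for all $\mu\geq\lam\geq\lam_0$, all $x$, and all $z\in Y$. Combining this with $s(f(x),g(y))=\bv_\nu\bw_{\mu\geq\nu}s(f_\mu(x),g(y))$ and the directedness of $D$ (to compare $\nu$ and $\lam$ through a common upper bound), I expect to reach the pointwise estimate $\al\wedge s(f(x),g(y))\leq s(f_\lam(x),g(y))$ for all $\lam\geq\lam_0$ and all $x,y$. Finally, from $r(x,y)\wedge\dt(f,g)\leq s(f(x),g(y))$ I would intersect with $\al$ and apply the adjunction $r(x,y)\wedge(-)\dashv r(x,y)\ra(-)$ to obtain $\al\wedge\dt(f,g)\leq r(x,y)\ra s(f_\mu(x),g(y))$ for every $\mu\geq\lam_0$; a meet over $x,y$ and then over $\mu\geq\lam_0$ gives $\al\wedge\dt(f,g)\leq\bw_{\mu\geq\lam_0}\dt(f_\mu,g)$, and a join over $\al$ yields the desired inequality.

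The main obstacle is the uniformity of the threshold $\lam_0$ in the point $x$. Lemma \ref{al-mon} is pointwise, so a priori the index after which $\flamx$ becomes $\al$-monotone could vary with $x$, and then the final meet over $x,y$ would be illegitimate. The inequality $\dt\leq d_\otimes$ of Proposition \ref{powobj}(2) is precisely what resolves this: forward Cauchyness measured by $\dt$ dominates all coordinates simultaneously through $d_\otimes$, so a single $\lam_0$ works for every $x$. A minor but essential companion fact, used implicitly in Lemma \ref{al-mon}, is that idempotency of $\al$ gives $\al\with c=\al\wedge c$ for all $c\in[0,1]$ (a case of Lemma \ref{contain idem}), which is what lets the $\al$-monotone estimates pass between $\with$ and $\wedge$.
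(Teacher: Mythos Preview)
Your proposal is correct and follows essentially the same route as the paper: both reduce to showing the pointwise limit is a Yoneda limit in $([(X,r)\ra(Y,s)],\dt)$, and both rely on Proposition~\ref{approx}, Lemma~\ref{Cau is al-mon}, Lemma~\ref{al-mon}, and the uniformity of $\lam_0$ via $\dt\leq d_\otimes$. The only difference is presentational: the paper establishes the single equality $\al\wedge\dt(f,g)=\al\wedge\bv_\lam\bw_{\lam\leq\mu}\dt(f_\mu,g)$ through a chain of equalities that repeatedly commutes $\al\wedge(-)$ past $r(x,y)\ra(-)$ (using Proposition~\ref{resd prop}), whereas you separate the two inequalities, handle ``$\geq$'' without the idempotent machinery, and for ``$\leq$'' invoke the adjunction $r(x,y)\wedge(-)\dashv r(x,y)\ra(-)$ directly after the pointwise estimate---a slightly cleaner path to the same conclusion.
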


\begin{proof} Suppose $\flam$ is a forward Cauchy net in $([(X,r)\ra(Y,s)],\dt)$. We claim that the Yoneda continuous $[0,1]$-functor $f:(X,r)\lra{(Y,s)}$ given by 
$$f(x)=\lim f_{\lam}(x)$$ is a Yoneda limit of $\flam$ in $([(X,r)\ra(Y,s)],\dt)$. That is, we need to check that, for all $g\in [(X,r)\ra (Y,s)]$,   
    \[\dt(f,g)=\bv_{\lam\in D}\bw_{\lam\leq\mu}\dt(f_\mu,g).\]

 Let $\al$ be an idempotent element such that $\al\ll 1$ in $M$.  By Lemma \ref{Cau is al-mon},  there is a $\lam_{0}\in D$ such that for all $\mu\geq\lam\geq\lam_0$, 
 $$\al\leq\dt(f_\lam,f_\mu)\leq s(f_\lam(x),f_\mu(x))$$ for all $x\in X$.
Furthermore, by Lemma \ref{al-mon}, it holds that, for all $\mu\geq\lam\geq\lam_0$, 
$$\al\wedge\dt(f_\mu,g)\leq\al\wedge\dt(f_\lam,g)$$  and
$$\al\wedge s(f_\mu(x),g(y))\leq\al\wedge s(f_\lam(x),g(y))$$ for all  $g\in[(X,r)\ra(Y,s)]$ and $x,y\in X$.

Thus, it follows that
\begin{align*}
\al\wedge\bv_{\lam\in D}\bw_{\lam\leq\mu}\dt(f_\mu,g)
&=\bv_{\lam\in D}\bw_{\lam\leq\mu}\al\wedge\dt(f_\mu,g)\\
&=\bv_{\lam_0\leq\lam}\bw_{\lam\leq\mu}\al\wedge\dt(f_\mu,g)\\
&=\bw_{\lam_0\leq\mu}\al\wedge\dt(f_\mu,g)\\
&=\al\wedge\bw_{\lam_0\leq\mu} \dt(f_\mu,g),
\end{align*}
and similarly,   
\begin{align*}\al\wedge\bv_{\lam\in D}\bw_{\lam\leq\mu} s(f_\mu(x),g(y))
&=\bv_{\lam\in D}\bw_{\lam\leq\mu}\al\wedge\dt(f_\mu(x),g(y))\\
&=\bv_{\lam_0\leq\lam}\bw_{\lam\leq\mu}\al\wedge\dt(f_\mu(x),g(y))\\
&=\bw_{\lam_0\leq\mu}\al\wedge\dt(f_\mu(x),g(y))\\
&=\al\wedge\bw_{\lam_0\leq\mu}  s(f_\mu(x),g(y)).
\end{align*} for all $g\in[(X,r)\ra(Y,s)]$ and $x,y\in X$.

Therefore, for all $g\in [(X,r)\ra (Y,s)]$, one can calculate as below:
\begin{align*}
\al\wedge\dt(f,g)&=\al\wedge\left(\bw_{x,y\in X}r(x,y)\ra s(f(x),g(y))\right)\\
        &=\left(\bw_{x,y\in X}r(x,y)\ra a)\right)\wedge\left(\bw_{x,y\in X}r(x,y)\ra s(f(x),g(y))\right)\\
        &=\bw_{x,y\in X}\Big(\big(r(x,y)\ra a\big)\wedge\big(r(x,y)\ra s(f(x),g(y))\big)\Big)\\
        &=\bw_{x,y\in X}r(x,y)\ra(\al\wedge s(f(x),g(y)))\quad\quad\quad\quad\quad (\text{Proposition \ref{resd prop} } (3))\\
        &=\bw_{x,y\in X}r(x,y)\ra \Big(\al\wedge\bv_{\lam\in D}\bw_{\lam\leq\mu}s(f_\mu(x),g(y))\Big)\\
        &=\bw_{x,y\in X}r(x,y)\ra \Big(\al\wedge\bw_{\lam_0\leq\mu}s(f_\mu(x),g(y))\Big)\\
        &=\bw_{x,y\in X}\Big(r(x,y)\ra \al\Big)\wedge\Big(r(x,y)\ra \bw_{\lam_0\leq\mu}s(f_\mu(x),g(y))\Big)\quad\quad (\text{Proposition \ref{resd prop} }(3))\\
        &=\Big(\bw_{x,y\in X}r(x,y)\ra \al\Big)\wedge\Big(\bw_{x,y\in X}r(x,y)\ra \bw_{\lam_0\leq\mu}s(f_\mu(x),g(y))\Big)\\
        &=\al\wedge\bw_{\lam_0\leq\mu}\bw_{x,y\in X}r(x,y)\ra s(f_\mu(x),g(y))\quad\quad (\text{Proposition \ref{resd prop} }(3))\\
        &=\al\wedge\bw_{\lam_0\leq\mu}\dt(f_\mu,g)\\
        &=\al\wedge\bv_{\lam\in D}\bw_{\lam\leq\mu}\dt(f_\mu,g). 
    \end{align*}
    
Since the complete subquantale $(M,\with,1)$ of $([0,1],\with,1)$ satisfies the property $$1=\bv\{\al\in \Idm\mid a\ll 1 \text{ in } M\},$$   it follows that    
 \[\dt(f,g)=\bv_{{\al\in \Idm\atop \al\ll 1} }\Big(\al \wedge \dt(f,g)\Big)=\bv_{{\al\in \Idm\atop \al\ll 1} }\left(\al\wedge  \bv_{\lam\in D}\bw_{\lam\leq\mu}\dt(f_\mu,g)\right)=\bv_{\lam\in D}\bw_{\lam\leq\mu}\dt(f_\mu,g).\] for all $g\in [(X,r)\ra (Y,s)]$.     
Hence, $f$ is a Yoneda limit of $\flam$ as desired.
\end{proof}

\begin{prop}\label{separate}
     Let $(X,r),(Y,s),(Z,t)$ be Yoneda complete $[0,1]$-categories. A $[0,1]$-functor $f:(X,r)\times (Y,s)\lra (Z,t)$ is Yoneda continuous if and only if it is Yoneda continuous separately.
\end{prop}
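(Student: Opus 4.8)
The plan is to treat the two implications separately, the forward one being routine and the converse carrying all the weight. Throughout I will use the fact that Yoneda limits in the cartesian product are computed coordinatewise: since $[0,1]$ is a complete chain, $\wedge$ distributes over the monotone joins occurring in the forward Cauchy and Yoneda limit conditions, so a net $\{(x_\lam,y_\lam)\}_{\lam\in D}$ in $(X\times Y,r\times s)$ is forward Cauchy if and only if both coordinate nets are, and then $\lim(x_\lam,y_\lam)=(\lim x_\lam,\lim y_\lam)$. This is essentially the computation behind Proposition \ref{product}, and I would record it as a preliminary observation. An immediate consequence is that for fixed $y\in Y$ the insertion $\iota_y\colon(X,r)\lra(X\times Y,r\times s)$, $x\mapsto(x,y)$, is a Yoneda continuous $[0,1]$-functor (apply the coordinatewise formula to $\{x_\lam\}$ together with the constant net at $y$), and symmetrically for $\iota_x$. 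Hence, if $f$ is Yoneda continuous, then $f(-,y)=f\circ\iota_y$ and $f(x,-)=f\circ\iota_x$ are Yoneda continuous as composites, giving the necessity.

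For sufficiency, assume $f$ is separately Yoneda continuous and let $\{(x_\lam,y_\lam)\}_{\lam\in D}$ be an arbitrary forward Cauchy net in $(X\times Y,r\times s)$. Writing $a=\lim x_\lam$ and $b=\lim y_\lam$, the preliminary observation gives $(a,b)=\lim(x_\lam,y_\lam)$, so I must show that $f(a,b)$ is a Yoneda limit of the forward Cauchy net $\{f(x_\lam,y_\lam)\}_{\lam\in D}$. Let $c=\lim f(x_\lam,y_\lam)$, which exists since $(Z,t)$ is Yoneda complete; it then suffices to prove $t(c,f(a,b))=t(f(a,b),c)=1$, because Yoneda limits are unique up to this equivalence and stable under it. For the first equality I use that the coordinate nets converge from below: the identity $\bv_\lam\bw_{\lam\leq\mu}r(x_\mu,a)=r(a,a)=1$ (and its analogue for $s$) forces $r(x_\mu,a)\geq\al$ and $s(y_\mu,b)\geq\al$ eventually, for each $\al<1$. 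Since $f$ is a $[0,1]$-functor, $t(f(x_\mu,y_\mu),f(a,b))\geq r(x_\mu,a)\wedge s(y_\mu,b)\geq\al$ eventually, and because $t(c,f(a,b))=\bv_\lam\bw_{\lam\leq\mu}t(f(x_\mu,y_\mu),f(a,b))$ by the defining property of $c$, letting $\al\to1$ yields $t(c,f(a,b))=1$.

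The substantial step is $t(f(a,b),c)=1$, where no formula for distances \emph{into} the Yoneda limit $c$ is available and the off-diagonal values $f(x_\mu,y_\nu)$ must be controlled. Here I would expand $f(a,b)$ by separate continuity twice: $f(a,b)=\lim_\mu f(x_\mu,b)$ as $a=\lim_\mu x_\mu$, and $f(x_\mu,b)=\lim_\nu f(x_\mu,y_\nu)$ as $b=\lim_\nu y_\nu$. Consequently $t(f(a,b),c)=\bv_\mu\bw_{\mu\leq\mu'}t(f(x_{\mu'},b),c)$ and $t(f(x_\mu,b),c)\geq\bw_{\mu\leq\nu}t(f(x_\mu,y_\nu),c)$. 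To bound the off-diagonal term I route through the diagonal via the triangle inequality $t(f(x_\nu,y_\nu),c)\with t(f(x_\mu,y_\nu),f(x_\nu,y_\nu))\leq t(f(x_\mu,y_\nu),c)$ and estimate the two factors: $t(f(x_\mu,y_\nu),f(x_\nu,y_\nu))\geq r(x_\mu,x_\nu)\wedge s(y_\nu,y_\nu)=r(x_\mu,x_\nu)$, while $t(c,c)=1$ forces $\bv_\lam\bw_{\lam\leq\nu}t(f(x_\nu,y_\nu),c)=1$, so $t(f(x_\nu,y_\nu),c)\geq\al$ eventually. Fixing $\al<1$, eventual $\al$-monotonicity of the forward Cauchy net $\{x_\lam\}$ (valid for every $\al<1$ by the argument of Lemma \ref{Cau is al-mon}, since $\al\ll1$ in the chain $[0,1]$) gives $r(x_\mu,x_\nu)\geq\al$ for $\nu\geq\mu\geq\lam_0$; hence for all large $\mu$ and all $\nu\geq\mu$ one gets $t(f(x_\mu,y_\nu),c)\geq\al\with\al$, so $t(f(x_\mu,b),c)\geq\al\with\al$ and finally $t(f(a,b),c)\geq\al\with\al$. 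As $\with$ is continuous, $\bv_{\al<1}(\al\with\al)=1$, whence $t(f(a,b),c)=1$.

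The main obstacle is precisely this last estimate: because there is no closed formula for $t(w,c)$ when $c$ is a Yoneda limit, one cannot compare $f(a,b)$ with $c$ directly and is forced to pass through the diagonal terms $f(x_\nu,y_\nu)$, which are only \emph{eventually} $\al$-close to $c$. Making this succeed relies on the synchronization of the two coordinates provided by forward Cauchyness of the product net (so that a single threshold $\lam_0$ controls both $r$ and $s$), on eventual $\al$-monotonicity, and on the continuity of $\with$ to recover the value $1$ as $\al\to1$. Notably, no idempotency of $\al$ is required, which is why the statement holds for arbitrary Yoneda complete $[0,1]$-categories rather than only for $M$-categories.
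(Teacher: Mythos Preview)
Your proof is correct. The paper itself gives no argument here beyond the sentence ``The proof is similar to that of \cite[Proposition 3.7]{Lai2016},'' so there is nothing to compare against in the present text; the referenced proposition treats the analogous statement for the tensor product $(X,r)\otimes(Y,s)$, and your argument is precisely the natural adaptation of that proof to the cartesian product. The structure you use---coordinatewise computation of Yoneda limits in the product, introduction of the auxiliary limit $c\in Z$, the routine verification of $t(c,f(a,b))=1$, and the decisive estimate $t(f(a,b),c)\geq\al\with\al$ obtained by routing the off-diagonal values $f(x_\mu,y_\nu)$ through the diagonal $f(x_\nu,y_\nu)$ via eventual $\al$-monotonicity---is exactly what the cited argument does, with $\wedge$ replacing $\with$ in the product structure where appropriate. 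Your closing observation that continuity of $\with$ (rather than idempotency of $\al$) suffices to recover $1$ from $\bv_{\al<1}\al\with\al$ is correct and worth making explicit, since it explains why the proposition holds for arbitrary Yoneda complete $[0,1]$-categories and not only for $M$-categories.
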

\begin{proof}
    The proof is similar to that of \cite[Proposition 3.7]{Lai2016}.
\end{proof}

\begin{prop}\label{evmap} 
 Suppose $(X,r)$ and $(Y,s)$ are  Yoneda complete $M$-categories, then the evaluation map $\ev:(X,r)\times{([(X,r)\ra(Y,s)],\dt)}\lra{(Y,s)}$ given by $\ev(x,f)=f(x)$ is a Yoneda continuous $[0,1]$-functor.
\end{prop}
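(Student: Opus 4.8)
The plan is to establish the two assertions separately: that $\ev$ is a $[0,1]$-functor, and that it is Yoneda continuous. First I would verify functoriality. The domain carries the cartesian structure $(r\times\dt)((x,f),(x',g))=r(x,x')\wedge\dt(f,g)$, so I must check
$$r(x,x')\wedge\dt(f,g)\leq s(f(x),g(x'))$$
for all $x,x'\in X$ and all $f,g\in[(X,r)\ra(Y,s)]$. By definition $\dt(f,g)=\bw_{u,v\in X}r(u,v)\ra s(f(u),g(v))$ is a meet over all pairs, so in particular $\dt(f,g)\leq r(x,x')\ra s(f(x),g(x'))$ for the single pair $(x,x')$; combining this with the adjunction $a\wedge(a\ra b)\leq b$ yields the desired inequality. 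This is precisely the co-universal property that the structure map $\dt$ was designed to carry, so no real difficulty arises here.

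For Yoneda continuity I would invoke Proposition \ref{separate}. Its hypotheses are met: $(X,r)$ and $(Y,s)$ are Yoneda complete by assumption, the function space $([(X,r)\ra(Y,s)],\dt)$ is Yoneda complete by Theorem \ref{exp in Kcat}, and hence the domain product $(X,r)\times([(X,r)\ra(Y,s)],\dt)$ is Yoneda complete by Proposition \ref{product}. Thus it suffices to prove that $\ev$ is Yoneda continuous separately in each variable. In the first variable, fixing $f$, the map $\ev(-,f)\colon(X,r)\lra(Y,s)$ is literally $f$ itself, which is Yoneda continuous because it lies in $[(X,r)\ra(Y,s)]$.

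The second variable is where the work already invested in Theorem \ref{exp in Kcat} pays off. Fixing $x\in X$, I must show that $\ev(x,-)\colon([(X,r)\ra(Y,s)],\dt)\lra(Y,s)$, $f\mapsto f(x)$, preserves Yoneda limits. Given a forward Cauchy net $\flam$ in $([(X,r)\ra(Y,s)],\dt)$, Theorem \ref{exp in Kcat} identifies its Yoneda limit as the functor $f$ defined pointwise by $f(x)=\lim f_\lam(x)$. Evaluating at $x$ therefore sends this limit $f$ to $f(x)=\lim f_\lam(x)=\lim\ev(x,f_\lam)$, which is exactly the Yoneda limit of the image net $\{\ev(x,f_\lam)\}_{\lam\in D}$ in $(Y,s)$. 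Hence $\ev(x,-)$ is Yoneda continuous, and by Proposition \ref{separate} so is $\ev$. The principal obstacle has in fact already been surmounted in Theorem \ref{exp in Kcat}: the pointwise formula for the Yoneda limit on the function space is precisely what makes separate continuity in the functional variable immediate. Without that description one would instead have to argue directly that the pointwise limit represents the $\dt$-limit, which is the genuinely delicate point.
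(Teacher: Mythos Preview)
Your proof is correct and follows essentially the same approach as the paper: verify functoriality of $\ev$, then apply Proposition \ref{separate} by noting that $\ev(-,f)=f$ is Yoneda continuous by assumption and that $\ev(x,-)$ is Yoneda continuous because Theorem \ref{exp in Kcat} identifies the Yoneda limit in $([(X,r)\ra(Y,s)],\dt)$ as the pointwise limit. You supply more detail than the paper (the explicit inequality for functoriality and the verification of the hypotheses of Proposition \ref{separate}), but the structure of the argument is identical.
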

\begin{proof}
    It is clear that $\ev$ is a $[0,1]$-functor and $\ev(-,f):(X,r)\lra{(Y,s)}$  is Yoneda continuous for each $f\in[(X,r)\ra(Y,s)]$. Given an $x\in X$  and a forward Cauchy net $\flam$ in $([(X,r)\ra(Y,s)],\dt)$, by Theorem \ref{exp in Kcat}, the function $f:X\lra{Y}$ given by $f(x)=\lim f_{\lam}(x)$ is a Yoneda limit of $\flam$.  It follows that $\ev(x,f)=f(x)=\lim f_{\lam}(x)$, hence $\ev(x,-)$ is Yoneda continuous. By Proposition \ref{separate}, $\ev$ is Yoneda continuous.
\end{proof}

\begin{prop}\label{transp}  Suppose $(X,r)$, $(Y,s)$ and $(Z,t)$ are all Yoneda complete $M$-categories and $f:(X,r)\times{(Z,t)}\lra{(Y,s)}$ is a Yoneda continuous $[0,1]$-functor, then $\hat{f}:(Z,t)\lra{([(X,r)\ra(Y,s)],\dt)}$ is also a Yoneda continuous $[0,1]$-functor, where $\hat{f}(z)=f(-,z)$ for each $z\in Z$.
\end{prop}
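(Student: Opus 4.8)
The plan is to verify three things about $\hat f$ in turn: that each value $\hat f(z)=f(-,z)$ actually lands in the homset $[(X,r)\ra(Y,s)]$, that $\hat f$ is a $[0,1]$-functor, and finally that $\hat f$ preserves Yoneda limits. The first point is immediate from separate Yoneda continuity: since $(X,r)\times(Z,t)$ is Yoneda complete by Proposition \ref{product}, the hypothesis that $f$ is Yoneda continuous together with Proposition \ref{separate} gives that $f(-,z)\colon(X,r)\lra(Y,s)$ is Yoneda continuous for each fixed $z\in Z$, so $\hat f(z)\in[(X,r)\ra(Y,s)]$.

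For the $[0,1]$-functor condition I must check $t(z,z')\leq\dt(\hat f(z),\hat f(z'))$ for all $z,z'\in Z$. Unwinding the structure map,
\[
\dt(\hat f(z),\hat f(z'))=\bw_{x,y\in X}r(x,y)\ra s\big(f(x,z),f(y,z')\big),
\]
so by the adjunction $x\wedge y\leq z\iff y\leq x\ra z$ it suffices to prove $r(x,y)\wedge t(z,z')\leq s(f(x,z),f(y,z'))$ for all $x,y\in X$. But the left-hand side is exactly $r\times t\big((x,z),(y,z')\big)$, so this is nothing but the functoriality of $f\colon(X,r)\times(Z,t)\lra(Y,s)$.

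The substantive step is Yoneda continuity. I would fix a forward Cauchy net $\flam$—renaming, a net $\{z_\lam\}_{\lam\in D}$ in $(Z,t)$ with Yoneda limit $z=\lim z_\lam$—and show $\hat f(z)$ is a Yoneda limit of $\{\hat f(z_\lam)\}_{\lam\in D}$. Since $\hat f$ is a $[0,1]$-functor, this image net is forward Cauchy in $([(X,r)\ra(Y,s)],\dt)$, and by Theorem \ref{exp in Kcat} it has a Yoneda limit $g$; crucially, the proof of that theorem computes $g$ pointwise, $g(x)=\lim_\lam \hat f(z_\lam)(x)=\lim_\lam f(x,z_\lam)$. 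Now for each fixed $x$ the net $\{f(x,z_\lam)\}_{\lam\in D}$ is the image of $\{z_\lam\}$ under the $[0,1]$-functor $f(x,-)$, hence forward Cauchy, and separate Yoneda continuity (Proposition \ref{separate}) yields $\lim_\lam f(x,z_\lam)=f(x,z)=\hat f(z)(x)$. Therefore $g=\hat f(z)$, so $\hat f(\lim z_\lam)=\lim\hat f(z_\lam)$ and $\hat f$ is Yoneda continuous.

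The main obstacle is the last paragraph: everything hinges on knowing that the Yoneda limit in the function space $([(X,r)\ra(Y,s)],\dt)$ is realized pointwise, which is not part of the bare statement of Theorem \ref{exp in Kcat} but is extracted from its proof. Once that pointwise description is in hand, matching it against the separate Yoneda continuity of $f$ is routine; the care needed is only in justifying that the several nets involved are genuinely forward Cauchy and that the pointwise-limit function and $\hat f(z)$ coincide as elements of the homset rather than merely agreeing up to the symmetrization of $\dt$.
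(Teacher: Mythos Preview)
Your proposal is correct and follows essentially the same route as the paper: verify $\hat f$ is a $[0,1]$-functor, then for a forward Cauchy net $\{z_\lam\}$ with Yoneda limit $z$ use Theorem~\ref{exp in Kcat} to get the pointwise Yoneda limit $g(x)=\lim f(x,z_\lam)$ of the image net, and conclude $g=\hat f(z)$ via separate Yoneda continuity of $f$. You are in fact more explicit than the paper in checking that $\hat f(z)$ lands in $[(X,r)\ra(Y,s)]$ and in spelling out the $[0,1]$-functor condition, but the skeleton of the argument is identical.
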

\begin{proof}
    It is easily verified that $\hat{f}$ is a $[0,1]$-functor. Given a forward Cauchy net $\zlam$ in $(Z,t)$ which converges to $a\in Z$, then $\{f(-,z_\lam)\}_{\lam\in D}$ is a forward Cauchy net in $([(X,r)\ra(Y,s)],\dt)$ with a pointwise   Yoneda limit $g$, which is given by 
    $$g(x)=\lim f(x,z_\lam).$$ 
 Since $f$ is Yoneda continuous, it follows that $\hat{f}(a)(x)=f(x,a)=\lim f(x,z_\lam)=g(x)$ for all $x\in X$, that is,  $\hat{f}(a)=g$. The $[0,1]$-functor $\hat{f}$ is Yoneda continuous as desired.
\end{proof}

Combine the results of Proposition \ref{product}, Theorem \ref{exp in Kcat}, Proposition \ref{evmap} and Proposition \ref{transp}, we have the following:
\begin{thm}\label{YocomK is ccc}
      The category $\YComM$ is cartesian closed.
\end{thm}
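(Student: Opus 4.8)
The plan is to verify the definition of cartesian closedness recalled in Section~4 directly: I must show that $\YComM$ has finite products and that for each object $A$ the functor $A\times-$ admits a right adjoint, exhibited concretely through the power objects $B^A=([(X,r)\ra(Y,s)],\dt)$ together with the evaluation morphisms as co-universal arrows. Every ingredient has already been prepared, so the argument is essentially an assembly of Proposition~\ref{product}, Theorem~\ref{exp in Kcat}, Proposition~\ref{evmap} and Proposition~\ref{transp}.

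First I would treat finite products. The terminal object $\vmathbb{1}=(\{\star\},e)$ is a finite $M$-category, hence trivially Yoneda complete and an object of $\YComM$. For a binary product of Yoneda complete $M$-categories $(X,r)$ and $(Y,s)$, I take the cartesian product $(X\times Y,r\times s)$ formed in $\RCat$; since the complete sublattice $M$ is closed under $\wedge$, it is again an $M$-category, and by Proposition~\ref{product} it is Yoneda complete, so it lies in $\YComM$. Because Yoneda limits in a cartesian product are computed componentwise, the projections are Yoneda continuous, and the pairing $\langle h_1,h_2\rangle$ of any two Yoneda continuous functors into the factors is again Yoneda continuous; this furnishes the universal property of the product in $\YComM$.

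Next, fix $A=(X,r)$ and $B=(Y,s)$ in $\YComM$ and set $B^A=([(X,r)\ra(Y,s)],\dt)$. By Proposition~\ref{powobj}(3) all the structure values of $\dt$ lie in $M$, so $B^A$ is an $M$-category, and by Theorem~\ref{exp in Kcat} it is Yoneda complete; hence $B^A$ is an object of $\YComM$. The evaluation map $\ev:A\times B^A\lra B$, $\ev(x,f)=f(x)$, is a Yoneda continuous $[0,1]$-functor by Proposition~\ref{evmap}. To see that $\ev$ is co-universal, take any object $(Z,t)$ of $\YComM$ and any morphism $f:A\times(Z,t)\lra B$: Proposition~\ref{transp} shows that the transpose $\hat f:(Z,t)\lra B^A$, $\hat f(z)=f(-,z)$, is again a Yoneda continuous $[0,1]$-functor, hence a morphism of $\YComM$. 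Since $\ev\circ(\id_A\times\hat f)(x,z)=\hat f(z)(x)=f(x,z)$, we obtain $\ev\circ(\id_A\times\hat f)=f$, and the same pointwise equation forces $\hat f$ to be the unique such morphism. This exhibits the right adjoint $A\times-\dashv(-)^A$ and completes the proof.

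I do not expect a genuine obstacle at this final stage, since the substance of the matter has already been discharged: the crucial step is the Yoneda completeness of the function space (Theorem~\ref{exp in Kcat}), whose proof rests on the idempotency of $a\with a$ for $a\in M$ and on the approximation $1=\bv\{\al\in\Idm\mid\al\ll1\text{ in }M\}$ of Proposition~\ref{approx}. The only point in the present theorem deserving a moment's care is the compatibility of the two structure maps involved: the cartesian closed structure uses $\times$ (built from $\wedge$) and the residuated operation $\ra$ entering $\dt$, rather than the monoidal pair $\otimes$ and $d_\otimes$. One must therefore confirm that the power object supplied in $\MCat$ by Theorem~\ref{ccc} and Proposition~\ref{powobj} restricts correctly to $\YComM$, and this is precisely what the Yoneda completeness of $B^A$ together with the Yoneda continuity of $\ev$ and of $\hat f$ guarantees.
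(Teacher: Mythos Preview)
Your proposal is correct and follows exactly the paper's approach: the paper's own proof is the one-line remark that the theorem follows by combining Proposition~\ref{product}, Theorem~\ref{exp in Kcat}, Proposition~\ref{evmap} and Proposition~\ref{transp}, and you have simply spelled out this assembly in detail (checking the terminal object, binary products, that $B^A$ lies in $\YComM$, and the co-universal property of $\ev$).
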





\begin{cor}
Let $(K,\with,1)$ be a complete subquantale of $(M,\with,1)$. 
\begin{itemize}
\item[(1)] The category ${K}\text{-}\mathbf{YCCat}$ is cartesian closed.
\item[(2)] All  symmetric $K$-categories in $K$-$\YCom$ and Yoneda continuous $[0,1]$-functors form a cartesian closed category. 
\end{itemize}
\end{cor}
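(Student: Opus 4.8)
The plan is to realize both categories as full subcategories of $\YComM$, which is cartesian closed by Theorem \ref{YocomK is ccc}, and to show that the cartesian closed structure of $\YComM$ simply restricts to them. The general tool is the elementary fact that a full subcategory $\bfB$ of a cartesian closed category $\bfC$ is again cartesian closed as soon as $\bfB$ contains the terminal object and is closed, up to the constructions performed in $\bfC$, under binary products and power objects: the universal properties of these objects restrict automatically to a full subcategory, and each evaluation morphism $\ev\colon A\times B^A\lra B$, being a $\bfC$-morphism between $\bfB$-objects, is a $\bfB$-morphism by fullness. So the whole task reduces to two closure checks in each case.

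First I would treat (1). Since $K\subseteq M$ and Yoneda completeness is intrinsic (forward Cauchy nets and Yoneda limits are computed from joins and meets in $[0,1]$, which coincide with those of the complete sublattice $K$), the category $\YComK$ is a full subcategory of $\YComM$ containing $\vmathbb{1}$. For closure under products, the cartesian product carries the structure $r\wedge s$; as $K$ is closed under $\wedge$ this is again $K$-valued, and it is Yoneda complete by Proposition \ref{product}. For closure under power objects I would invoke Proposition \ref{powobj}(3): the structure map $\dt(f,g)=\bw_{x,y\in X}r(x,y)\ra s(f(x),g(y))$ takes values in $K$ whenever $(X,r)$ and $(Y,s)$ are $K$-categories, since each term $r(x,y)\ra s(f(x),g(y))$ equals either $1$ or $s(f(x),g(y))$, both in $K$, and $K$ is closed under meets; meanwhile Theorem \ref{exp in Kcat} guarantees that $([(X,r)\ra(Y,s)],\dt)$ is Yoneda complete. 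Hence the product and the power object of any two objects of $\YComK$, formed in $\YComM$, lie again in $\YComK$, so the adjunction $A\times(-)\dashv(-)^A$ restricts and $\YComK$ is cartesian closed.

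For (2), the category of symmetric Yoneda complete $K$-categories is a full subcategory of $\YComK$ containing $\vmathbb{1}$, and the same two closure checks apply. Symmetry is plainly preserved by $r\wedge s$, so the product stays symmetric. The only point needing a short computation is that the power object is symmetric: with $r$ and $s$ symmetric, re-indexing the meet by swapping $x$ and $y$ and using symmetry of $r$ and $s$ gives $\dt(f,g)=\bw_{x,y\in X}r(x,y)\ra s(g(x),f(y))=\dt(g,f)$. Thus $([(X,r)\ra(Y,s)],\dt)$ is again a symmetric $K$-category, and the cartesian closed structure restricts as before.

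All the displayed manipulations are short; the substance of the corollary lies entirely in the already-established Theorem \ref{exp in Kcat} and Theorem \ref{YocomK is ccc}. The two things I would watch are: (a) confirming that Yoneda completeness is insensitive to the ambient quantale, so that $\YComK$ really is a \emph{full} subcategory of $\YComM$ sharing the same notion of morphism and the same products and power objects; and (b) the symmetry identity for $\dt$ in part (2), where I would write the index swap out carefully to be sure the two residuations match. I would deliberately avoid routing through Lemma \ref{stable of ccc}, because verifying that the reflection and coreflection onto $K$ send Yoneda complete objects to Yoneda complete objects (the coreflection post-composes $r$ with a meet-preserving but not join-preserving map) is more delicate than the direct closure argument above.
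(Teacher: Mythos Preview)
Your proposal is correct and follows essentially the same route as the paper: both argue by restricting the cartesian closed structure of $\YComM$ (Theorem \ref{YocomK is ccc}) to the full subcategory in question, checking that products and the power object $([(X,r)\ra(Y,s)],\dt)$ remain $K$-valued (via Proposition \ref{powobj}(3)) and, for part (2), symmetric via the index-swap computation you describe. The paper's proof is simply a terser version of yours, omitting the explicit statement of the general closure principle and the caveats (a) and (b) you flag.
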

\begin{proof}(1): Notice that, for Yoneda complete $K$-categories $(X,r)$ and $(Y,s)$, both the product $(X,r)\times (Y,s)$ and the power object $([(X,r)\ra(Y,s)],\dt)$ in $\YComM$ are also $K$-categories. 

(2): For symmetric Yoneda complete $K$-categories $(X,r)$ and $(Y,s)$, it is easy to see that product $(X,r)\times (Y,s)$ is also a symmetric Yoneda complete $K$-category. It suffices to check that the power object $([(X,r)\ra(Y,s)],\dt)$ in $\YComM$ is a symmetric $K$-category, that is, for all $f,g\in [(X,r)\ra(Y,s)]$:
	\begin{align*}
		\dt(f,g)&=\bigwedge_{x,y\in X}r(x,y)\ra s(f(x),g(y))\\
		&=\bigwedge_{x,y\in X}r(y,x)\ra s(g(y),f(x))\\
		&=\dt(g,f).
	\end{align*}
\end{proof}

\begin{exmp} Consider the {\L}ukasiewicz norm $\with_{\L}$. Notice that $M=[0,0.5]\cup\{1\}$ and ${\L}_3=\{0,0.5,1\}\subseteq M$. Clearly, $({\L}_3,\&_{\L},1)$ is a complete subquantale of $(M,\&_{\L},1)$. Therefore, both $\YComM$ and ${\L}_3$-$\mathbf{YCCat}$ are cartesian closed. The fact that category ${\L}_3$-$\mathbf{YCCat}$ is cartesian closed is firstly shown in \cite{Liu2017}. 
\end{exmp}

If $(K,\with,1)$ is a complete subquantale of $(M,\with,1)$, then both $K^2$ and $K_\Delta$ are suitable subsets of $[0,1]^2$ contained in $M^2$. By the above Corollary, $\YCom\cap\RCat_{K^2}$ and $\YCom\cap\RCat_{K_\Delta}$ are cartesian closed. However, it remains open whether $\YCom\cap\mathbf{A}$ is cartesian closed for all cartesian closed and stable subconstruct $\mathbf{A}$ of $\RCat$.  

\section{Conclusion}

Let $\with$ be a continuous triangular norm with Archimedean blocks, that is, $\&$ is not idempotent. Neither the category $\RCat$ nor the category $\YCom$  is  cartesian closed. Some cartesian closed subcategories of them are found out.  Firstly,  all cartesian closed and stable subconstructs of   $\RCat$ is characterized by   suitable subsets of $[0,1]^2$. It is shown that a stable subconstructs of $\RCat$ is cartesian closed if and only if the associated suitable subset $S\subseteq M^2$, and then $\MCat$ is the largest one among all cartesian and stable subconstructs.   Secondly, it is shown that $\YComK$ is cartesian closed if $(K,\with,1)$ is a complete subquantale of $(M,\with,1)$. It is remarkable that the  quantale $(M,\with,1)$ need not be a frame but both the category $\MCat$ and $\YComM$ are cartesian closed.

 \section*{Declaration of competing interest}
The authors declare that they have no known competing financial interests or personal relationships that could have appeared to influence the work reported in this paper.
\section*{Acknowledgment}
The authors  express their sincere thanks to the referees for their most valuable comments and suggestions on this paper and acknowledge the support of National Natural Science Foundation of China (12171342).

\end{document}